\newtheorem{theorem}{Theorem}[section]
\newtheorem*{theorem*}{Theorem}
\newtheorem*{conjecture*}{Conjecture}
\newtheorem{question}[theorem]{Question}
\newtheorem*{question*}{Question}
\newtheorem*{guess*}{Guess}
\newtheorem*{Assumption*}{Assumption}
\newtheorem*{problem*}{Problem}
\newtheorem{lemma}[theorem]{Lemma}
\newtheorem*{lemma*}{Lemma}
\newtheorem{proposition}[theorem]{Proposition}
\newtheorem*{proposition*}{Proposition}
\newtheorem{corollary}[theorem]{Corollary}
\newtheorem*{corollary*}{Corollary}
\theoremstyle{definition}
\newtheorem{definition}[theorem]{Definition}
\newtheorem*{definition*}{Definition}
\newtheorem{remark}[theorem]{Remark}
\newtheorem*{remark*}{Remark}
\newtheorem{example}[theorem]{Example}
\newtheorem*{example*}{Example}
\newtheorem*{examples*}{Examples}
\newcommand{\CC}{\mathbb{C}}
\newcommand{\QQ}{\mathbb{Q}}
\newcommand{\ZZ}{\mathbb{Z}}
\title{Global Representation Ring and Knutson Index}
\author[1]{Dylan Johnston\thanks{Email: \texttt{dylan.johnston@warwick.ac.uk}, ORCiD: 0009-0004-4893-9305}}
\author[1]{Diego Martín Duro\thanks{Email: \texttt{diego.martin-duro@warwick.ac.uk}, ORCiD: 0000-0001-9349-2427}}
\author[1]{Dmitriy Rumynin\thanks{Email: \texttt{d.rumynin@warwick.ac.uk}, ORCiD: 0000-0001-9507-3058}}
\affil[1]{Department of Mathematics, University of Warwick, Coventry, CV4 7AL, UK}
\date{\today}
\begin{document}

\maketitle
\setlength\parindent{0pt}

\tableofcontents

\begin{abstract}
%We follow on the two pioneering papers: 
Global representation rings were discovered by Sarah Witherspoon in 1995 and the Knutson Index was introduced by the second author in 2022. In the present paper we introduce the Knutson Index for general commutative rings and study it for Burnside rings and global representation rings. We also introduce the global table of a finite group, that encompasses both the character table and the Burnside table of marks. We discuss what properties of a group can be recovered from its global table.

%We follow on from the pioneering paper by the second author where he introduces the Knutson Index for the character ring of a finite group. We introduce the Knutson Index for general commutative rings. We study it for Burnside rings and global representation rings. We also introduce the global table of a finite group, that encompasses both the character table and the Burnside table of marks.
\end{abstract}

\section{Introduction}
In $1973$ Donald Knutson conjectured that for every irreducible character $\chi$ of a finite group $G$ there exists a virtual character $\lambda \in \mathbb{Z}[\text{Irr}(G)]$ such that $\chi \otimes \lambda = \rho_\text{reg}$, where $\rho_{\text{reg}}$ denotes the regular character of $G$ \cite{K}. If such a $\lambda$ exists we call it a regular inverse to $\chi$. Note that regular inverses are not unique. In 1992 this conjecture was disproven by Savitskii, for instance, it does not hold for ${\mathrm SL}_2(\mathbb{F}_5)$ \cite{S}. In $2022$ this subject was revisited by the second author who defined the Knutson Index as a measure of how much the group failed to satisfy Knutson's Conjecture \cite{Diego}. Specifically, given a character $\chi$ of a group $G$ the Knutson Index of $\chi$ is defined as

$$ \mathcal{K}(\chi) = \min \big\{ n \; : \;  n>0 \ \mbox{ and } \ \chi \otimes \lambda = n\rho_{\text{reg}} \ \text{ for some } \ \lambda \in \mathbb{Z} [\text{Irr}(G)] \big\}. $$

The Knutson Index of the group $G$ is given by

$$ \mathcal{K}(G) = \text{lcm} \big\{ \mathcal{K}(\chi) \,:{\chi \in \text{Irr}(G)} \big\}. $$

In particular, a group $G$ satisfies Knutson's Conjecture if and only if $\mathcal{K}(G) = 1$.\\ 

The notion of the Knutson Index is generalised by the second author \cite{Diego2}. The definition is extended to allow inverting with respect to characters other than the regular one. The current paper builds on this idea. In Section~$2$ we define the Knutson Index for finitely generated commutative rings and define a subindex, that is used in later sections. The main result is essentially an algorithm for computing the Knutson Index of a reduced ring, that is implicit in Proposition~\ref{nice rings have finite inverse set}.\\

For the remainder of the paper, we return to thinking about a finite group $G$. In Section~$3$ we discuss the Knutson Index for the Burnside ring of $G$. One interesting application is that the subindex $\mathcal{K}_r^S$ controls when all exact sequences of Sylow $p-$subgroups of an exact sequence of groups split (cf. Theorem~\ref{main_th}):

\begin{theorem*}
Let $1 \to N \to G \to \Gamma \to 1$ be an exact sequence. Then $\mathcal{K}_r^S(G/N) = 1$ if and only if the sequence of Sylow $p$-subgroups $1 \to N_p \to G_p \to \Gamma_p \to 1$ splits for all primes $p$, where $G_p$ is a Sylow $p$-subgroup of $G$, $N_p = N \cap G_p$ and $\Gamma_p = (G_pN)/N$. 
\end{theorem*}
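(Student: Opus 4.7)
The plan is to unfold the definition of the subindex $\mathcal{K}_r^S$ in the Burnside ring $B(G)$, interpreting it as the least $n>0$ for which there exists a $\mathbb{Z}$-linear combination $\lambda$ of classes $[G/H]$, with $H$ of prime-power order, satisfying $[G/N]\cdot\lambda = n[G/1]$. Since $N$ is normal in $G$, the Mackey formula yields $[G/N]\cdot[G/H] = [G:NH]\,[G/(N\cap H)]$, which turns the problem into an explicit $\mathbb{Z}$-linear system whose coefficients are controlled by how the prime-power subgroups of $G$ meet $N$.

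For the implication $(\Leftarrow)$, assume each Sylow sequence $1\to N_p\to G_p\to\Gamma_p\to 1$ splits and choose a complement $K_p\le G_p$ of $N_p$. Then $K_p$ is a $p$-subgroup of $G$, $K_p\cap N = K_p\cap N_p = 1$, and $|NK_p| = |N||\Gamma_p|$, so $[G:NK_p] = |\Gamma|/|\Gamma_p|=:m_p$, whence $[G/N]\cdot[G/K_p] = m_p[G/1]$. The integers $m_p$ (for $p\mid|\Gamma|$) satisfy $\gcd\{m_p\} = 1$ because $p\nmid m_p$ while $p\mid m_q$ for every other $q\mid|\Gamma|$. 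B\'{e}zout produces $c_p\in\mathbb{Z}$ with $\sum_p c_pm_p = 1$, and the element $\lambda:=\sum_p c_p[G/K_p]\in B(G)^S$ witnesses $[G/N]\cdot\lambda = [G/1]$, so $\mathcal{K}_r^S([G/N]) = 1$.

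For $(\Rightarrow)$, suppose $\lambda\in B(G)^S$ realises $[G/N]\cdot\lambda = [G/1]$ and fix a prime $p$. Apply $\mathrm{Res}^G_{G_p}$. Normality of $N$ gives $\mathrm{Res}^G_{G_p}([G/N]) = m_p[G_p/N_p]$ and $\mathrm{Res}^G_{G_p}([G/1]) = |G|_{p'}[G_p/1] = m_p|N|_{p'}[G_p/1]$. Cancelling $m_p$ in the torsion-free ring $B(G_p)$ yields
\[
[G_p/N_p]\cdot\mu_p \;=\; |N|_{p'}\,[G_p/1],\qquad \mu_p:=\mathrm{Res}^G_{G_p}(\lambda),
\]
so the Knutson index of $[G_p/N_p]$ in $B(G_p)$ divides the $p'$-integer $|N|_{p'}$.

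The proof is then closed by the following Burnside-ring fact about $p$-groups, which I expect to be the main obstacle: for a $p$-group $P$ and $N\trianglelefteq P$, the Knutson index of $[P/N]$ in $B(P)$ is a power of $p$, equal to $1$ if and only if $N$ admits a complement in $P$. Granting it, the index of $[G_p/N_p]$ is both a $p$-power and a divisor of $|N|_{p'}$, hence equals $1$, so $N_p$ has a complement in $G_p$ and the Sylow sequence splits. I would establish this $p$-group fact through the ghost map $\phi:B(P)\hookrightarrow\prod_{(H)}\mathbb{Z}$: the equation $[P/N]\cdot\mu = n[P/1]$ translates to $\phi(\mu)_1 = n|N|$ and $\phi(\mu)_H = 0$ for $1<H\le N$, with no constraint when $H\not\le N$. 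Since the image of $\phi$ is cut out by the Dress--tom Dieck congruences, whose moduli $|N_P(H)/H|$ are all $p$-powers, every obstruction is $p$-local, forcing the minimum feasible $n$ to be a $p$-power; and $n=1$ is attained precisely by $\mu=[P/K]$ for a complement $K$ of $N$, as $[P/N]\cdot[P/K] = [P/1]$ then holds by the Mackey formula.
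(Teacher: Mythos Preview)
Your argument is essentially sound, but you have misread the definition of $S$: in the paper $S$ consists of \emph{all} transitive $G$-sets $G/H$ (the rows of the table of marks), not only those with $H$ of prime-power order. Fortunately this does not break the proof. In the $(\Leftarrow)$ direction your witnesses $[G/K_p]$ satisfy $K_p\cap N=\{e\}$, hence lie in $S(G/N)$ under the correct definition; in the $(\Rightarrow)$ direction your restriction argument applies to an arbitrary $\lambda\in B(G)$, so the weaker hypothesis $\mathcal{K}_r^S(G/N)=1$ with the paper's $S$ still gives an equation $[G/N]\cdot\lambda=[G/1]$ to restrict.

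Your route is genuinely different from the paper's. The paper first establishes the product formula $\mathcal{K}_r^S(G/N)=\prod_p\mathcal{K}_r^S(G_p/N_p)$ by a direct comparison of the index sets $\mathrm{Ind}(G/N)$ and $\mathrm{Ind}(G_p/N_p)$, then invokes the $p$-group case where $\gcd=\min$ on powers of $p$. You bypass the product formula entirely: B\'{e}zout handles $(\Leftarrow)$, and functoriality of restriction handles $(\Rightarrow)$. This is more direct, though it does not recover the product formula as a by-product.

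There is one real weak spot. Your sketch of the $p$-group fact via the ghost map and Dress--tom Dieck congruences shows the index is a $p$-power and that a complement yields index $1$, but it does \emph{not} show that index $1$ forces a complement to exist --- the word ``precisely'' is asserted, not argued. A much simpler route is already implicit in your own Mackey computation: since $N\trianglelefteq P$, one has $[P/N]\cdot[P/K]=[P{:}NK]\,[P/(N\cap K)]$ for every $K\leq P$, so the intersection $B(P)\cdot[P/N]\cap\ZZ[P/1]$ is generated by the integers $[P{:}NK]$ with $N\cap K=1$. These are all powers of $p$, so their $\gcd$ equals their minimum, and the minimum is $1$ exactly when some $K$ satisfies $NK=P$ and $N\cap K=1$. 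This is precisely the argument in the paper's Lemma~3.5, and it replaces the ghost-map detour entirely.
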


In the fourth and final section, we discuss the reduced global representation ring of a finite group $G$. This is a variation of the ring introduced by Sarah Witherspoon \cite{W}. Given a normal subgroup $N$, this is the reduced Grothendieck ring of $G-$equivariant vector bundles on finite $G/N$-sets. Just as for the Burnside ring, the reduced global representation ring can be encoded in a table, which we denote by $T(G,N)$. In Propositions~\ref{description_of_blocks} and \ref{properties global table} we describe what sort of information about $G$ the table $T(G,N)$ contains. \\

We call $T(G,e)$ the global table of $G$. It contains a plethora of information about the group. For instance, it ``knows" the character table and the table of marks. It does not know everything: we have found two pairs of non-isomorphic groups of order $256$ (smallest possible) with the same global tables. As part of this search, we have found many pairs of non-isomorphic groups with the same character table and table of marks, cf. Appendix~\ref{appendix-samecharburn} (the smallest possible order is $243$ here). 
%We have also found the smallest pair of groups of order $243$ with the same character table and table of marks.
It is an interesting horizon for future research to determine the full extent of the global table's knowledge of the underlying group. \\

The global representation ring has an obvious regular element. This leads to a discussion of the Knutson Index and numerous future directions of research, which we facilitate by stating two questions throughout the paper. 

\section*{Acknowledgements}

We are grateful to Benjamin Sambale, who told us about Shemetkov's Theorem and answered Question~3.7 (now {Proposition}~\ref{Shemetkov}) from an early version of the paper. The first author was supported by the Heilbronn Institute for Mathematical Research (HIMR) and the UK Engineering and Physical Sciences Research Council (EPSRC) under the $\lq\lq$Additional Funding Programme for Mathematical Sciences" grant EP/V521917/1. The second author was supported by the UK Engineering and Physical Sciences Research Council (EPSRC) grant EP/T51794X/1. The third author is grateful to the Max Planck
Institute for Mathematics in Bonn for its hospitality and financial support.

\section{Knutson Index for Rings}

%Let $R$ be a commutative ring, finitely generated as a $\ZZ$-module. 
Let $R$ be a finitely generated commutative ring. 
Suppose that $R$ is equipped with a ring homomorphism $\alpha: R \to \mathbb{Z}$.  We call $\alpha$ {\em the dimension homomorphism} and $\alpha (r)$ {\em the dimension} of the element $r\in R$. We call an element $r \in R$ {\em regular} if $\alpha(r)\neq 0$ and $x\cdot r = \alpha(x) r$ for all $x \in R$. In particular, observe that any regular element $r$ does not belong to the additive torsion and satisfies $r^2 = \alpha(r)r$. 

\begin{definition}
{\em The Knutson Index} $\mathcal{K}_r(x)$ of an element $x \in R$ with respect to a regular element $r \in R$ is the non-negative integer $m$ such that $Rx \cap \ZZ r = m \ZZ r.$ The Knutson Index of the ring $R$ with fixed generating set of non-zero dimensional $\ZZ$-module generators $\{x_1,\dots,x_n\}$ is defined as $\mathcal{K}_r(R,\{x_1,\dots,x_n\}) = \text{lcm}\big\{\mathcal{K}_r(x_i)\big\}$ and the Knutson Index $\mathcal{K}_r(R)$ of the ring is the minimum over all such generating sets.
\end{definition}

Observe that $\mathcal{K}_r(x)$ always divides $\alpha(x)$. In particular, we exclude zero dimensional generators in the last definition because $\mathcal{K}_r(x)=0$, turning the least common multiples to zero.\\

One of our main examples is the character ring $R$ of a finite group $G$.
The terminology is influenced by this example:
$\alpha (\chi) = \chi (1_G)$ and $r$ is the regular character.
%If we take $R$ to be the character ring of a finite group and $r$ to be the regular character, we %recover the original Knutson Index by choosing as generators the irreducible characters.

\begin{definition}
Let $S \subset R$ be a subset, and for any $x \in R$ define $S(x) = \{s \in S: s \cdot x \in \ZZ r \}$. Associated to $S(x)$ we also set $\text{Ind}(x) = \{m \in \ZZ: s \cdot x = m \cdot r \text{ for } s \in S(x)\}$. Define the Knutson $S-$subindex of $x \in R$ with respect to the regular element $r \in R$ as  $\mathcal{K}_r^S(x) := \text{gcd}\big(\text{Ind}(x)\big)$. By convention, if $S(x) = \emptyset$ then we set $\mathcal{K}_r^S(x) = \infty$. 
\end{definition}

Note that $\mathcal{K}^S_r(x)$ equals the smallest positive integer $m$ such that $x \cdot \sum c_i s_i = m r$ for some $c_i \in \ZZ$ and $s_i \in S(x)$.

\begin{lemma}
Let $S \subset R$ be a subset and $r \in R$ be a regular element. Then for any $x \in R$ we have that $\mathcal{K}_r(x)$ divides $\mathcal{K}_r^S(x)$.
\end{lemma}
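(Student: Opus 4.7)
The plan is to exploit the alternative description of $\mathcal{K}_r^S(x)$ recorded in the remark immediately preceding the lemma: $\mathcal{K}_r^S(x)$ is the smallest positive integer $m$ for which one can find $c_i \in \mathbb{Z}$ and $s_i \in S(x)$ with $x \cdot \sum_i c_i s_i = m r$. Given this reformulation, the divisibility essentially falls out of the definition of $\mathcal{K}_r(x)$ as the generator of the ideal $Rx \cap \mathbb{Z} r$ inside $\mathbb{Z} r$.

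First I would dispense with the degenerate case $S(x) = \emptyset$: here $\mathcal{K}_r^S(x) = \infty$ by convention, and every positive integer (in particular $\mathcal{K}_r(x)$) vacuously divides it. Assume from now on that $S(x) \neq \emptyset$ and write $k = \mathcal{K}_r^S(x) < \infty$. Choose witnesses $c_i \in \mathbb{Z}$, $s_i \in S(x)$ with $x \cdot \sum_i c_i s_i = k r$, and set $y = \sum_i c_i s_i$. Since each $s_i \in S \subset R$ and $R$ is a ring, $y \in R$, and by construction $yx = kr$.

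Therefore $k r = yx \in Rx$, while trivially $k r \in \mathbb{Z} r$, so $k r \in Rx \cap \mathbb{Z} r = \mathcal{K}_r(x)\, \mathbb{Z} r$. Write $k r = \mathcal{K}_r(x) \cdot n \cdot r$ for some $n \in \mathbb{Z}$. Because the regular element $r$ is not additively torsion (as explicitly observed just after the definition of regularity), we may cancel $r$ to conclude $k = \mathcal{K}_r(x) \cdot n$, i.e.\ $\mathcal{K}_r(x) \mid \mathcal{K}_r^S(x)$.

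There is no serious obstacle here: the only thing worth pausing on is the cancellation of $r$ in an abelian group which could \emph{a priori} have torsion, but this is guaranteed by the remark that every regular element is non-torsion. The proof is otherwise a direct unravelling of the two definitions through the remark.
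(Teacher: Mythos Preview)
Your proof is correct and follows essentially the same route as the paper's: both use the description of $\mathcal{K}_r^S(x)$ as a realised multiple $x\cdot\sum c_i s_i = \mathcal{K}_r^S(x)\,r$, observe that this places $\mathcal{K}_r^S(x)\,r$ in $Rx\cap\mathbb{Z}r = \mathcal{K}_r(x)\,\mathbb{Z}r$, and read off the divisibility. Your version is slightly more careful in treating the degenerate case $S(x)=\emptyset$ and in justifying the cancellation of $r$ via its non-torsion property, but the argument is the same.
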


\begin{proof}
Suppose that $\mathcal{K}_r^S(x) = n$ and $\mathcal{K}_r(x) = m$. Then $x \cdot \sum c_i y_i = n r$ for some $y_i \in S(x)$ and $c_i \in \ZZ$. Therefore $n r \in Rx$ and so $n r \in m \ZZ r$ and we conclude that $m$ divides $n$.
\end{proof}

\begin{corollary}
%Let $R$ be a finitely generated ring with regular element $r \in R$, 
Let $R$ be a ring, finitely generated as a $\ZZ$-module, with regular element $r \in R$,
and dimension homomorphism $\alpha: R \rightarrow \ZZ$. Let $x \in R$ with $\alpha(x) \neq 0$. Then $\mathcal{K}_r(x)$ divides $\alpha(x)$, in particular, the Knutson Index of $x$ is finite. Moreover, the Knutson Index of $R$ is finite.
\end{corollary}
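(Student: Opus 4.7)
The plan is to deduce all three conclusions from the defining identity of a regular element. First I would exploit the fact that regularity gives $x \cdot r = \alpha(x)\, r$, so $\alpha(x)\, r$ lies in $Rx$ and manifestly lies in $\ZZ r$. Therefore $\alpha(x)\, r \in Rx \cap \ZZ r = \mathcal{K}_r(x)\, \ZZ r$. Since $r$ has no additive torsion (as noted right after the definition of a regular element), $\ZZ r \cong \ZZ$, so the containment $\alpha(x)\, r \in \mathcal{K}_r(x)\, \ZZ r$ translates into the divisibility $\mathcal{K}_r(x) \mid \alpha(x)$. Because $\alpha(x) \neq 0$, this forces $\mathcal{K}_r(x)$ to be a positive divisor of $\alpha(x)$, hence finite; this simultaneously settles the first two assertions.

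For the last assertion, the task is to verify that the set of admissible $\ZZ$-module generating sets (those consisting entirely of non-zero dimensional elements) is non-empty, because the Knutson Index of $R$ is defined as a minimum over this set. I would start with any finite $\ZZ$-module generating set $\{y_1,\dots,y_n\}$ of $R$, which exists by hypothesis. Then I would adjoin $r$ and replace each $y_i$ satisfying $\alpha(y_i) = 0$ by $y_i + r$. Every element of the resulting set has non-zero dimension (either equal to $\alpha(r)$ or equal to $\alpha(y_i) \neq 0$), and the new set still generates $R$ as a $\ZZ$-module since one can recover $y_i = (y_i + r) - r$ from the adjunction of $r$. By the previous paragraph, each member contributes a finite positive Knutson Index, so their lcm is finite, and the minimum of a non-empty set of positive integers is finite.

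The argument is essentially routine once the key identity $xr = \alpha(x)r$ is observed. The only genuine step requiring a small trick is producing the generating set with all non-zero dimensions; the translation-by-$r$ manoeuvre above is the main obstacle, and it is very mild.
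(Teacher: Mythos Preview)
Your argument is correct and, for the divisibility claim, essentially identical to the paper's: the paper applies the preceding lemma with $S=\{r\}$, which amounts precisely to your direct use of $xr=\alpha(x)r$ to place $\alpha(x)r$ in $Rx\cap\ZZ r$.

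For the finiteness of $\mathcal{K}_r(R)$ you are actually more careful than the paper. The paper's proof simply asserts that $\mathcal{K}_r(R)$ divides $\operatorname{lcm}\{\mathcal{K}_r(x_i)\}$ ``for any generating set $\{x_i\}$'' and concludes finiteness, tacitly assuming that a $\ZZ$-module generating set consisting of non-zero dimensional elements exists. You explicitly construct one via the translation $y_i\mapsto y_i+r$, which fills that small gap. So your route is marginally different only in that you supply the existence step the paper leaves implicit; otherwise the arguments coincide.
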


\begin{proof}
Take $S = \{ r \}$. Then $\mathcal{K}_r^S(x) = \alpha(x)$ and so by the lemma $\mathcal{K}_r(x)$ divides $\mathcal{K}_r^S(x) = \alpha(x)$. Note that $\mathcal{K}_r(R)$ divides $\text{lcm}\{\mathcal{K}_r(x_i)\}$ for any generating set $\{x_i\}$ and so it is finite.
\end{proof}

\begin{question}
Does there exist a finite subset $S \subset R$ such that for all $x \in R$ we have $\mathcal{K}_r(x) = \mathcal{K}_r^S(x)$?
\end{question}

%In the special case that the ring can be realised as a subring of $\CC^m = \CC \times \CC \times \dots \times \CC$ then the above question can be answered in the affirmative. We have the following lemma. 
In the special case of the next lemma we can answer the question in the affirmative.
It covers all the rings in the present paper because they all can be realised as subrings of 
$\CC^m = \CC \times \CC \times \ldots \times \CC$.

\begin{proposition}\label{nice rings have finite inverse set}
Suppose that the ring $R$ is reduced. Then there exists a finite subset $S \subset R$ such that $\mathcal{K}_r(x) = \mathcal{K}_r^S(x)$ for all $x \in R$.
\end{proposition}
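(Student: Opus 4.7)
The plan is to reduce the problem to a finite combinatorial one by embedding $R$ into a product of domains via its minimal primes, and then to assemble $S$ from finitely many explicit ``witness'' elements, together with enough scalings to handle the integrality constraint $\alpha(r) \mid \alpha(s)\alpha(x)$.

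Since $R$ is a finitely generated commutative reduced ring, it is Noetherian and admits only finitely many minimal primes $\pf_1, \ldots, \pf_n$, with the canonical map $R \hookrightarrow \prod_{i=1}^{n} R/\pf_i$ injective. The first step would be to exploit $r^2 = \alpha(r) r$ and $xr = \alpha(x) r$ in each domain $R/\pf_i$: these identities force the image of $r$ to be either $0$ or $\alpha(r)$ in each quotient, from which one can deduce that $\pf_1 := \ker \alpha$ is itself a minimal prime with $R/\pf_1 \cong \ZZ$, and that $r \in \pf_i$ for every $i \geq 2$.

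For each $T \subseteq \{2, \ldots, n\}$ set $J_T := \bigcap_{i \in T} \pf_i$ and, for $x \in R$, put $T(x) := \{i \geq 2 : x \notin \pf_i\}$. Working in the product of domains, one checks that $s \cdot x \in \ZZ r$ is equivalent to (i) $s \in J_{T(x)}$ and (ii) $\alpha(r) \mid \alpha(s)\alpha(x)$. Since only finitely many subsets $T$ arise, I would pick, for each $T$, an element $t_T \in J_T$ realising the non-negative generator $d_T$ of the ideal $\alpha(J_T) \subseteq \ZZ$. The main obstacle is condition (ii), which a single $t_T$ may well fail to satisfy; I circumvent this by including a scaled copy for every positive divisor of $\alpha(r)$, setting
\[
S := \bigl\{\, c \cdot t_T \,:\, T \subseteq \{2,\ldots,n\},\ c \text{ a positive divisor of } \alpha(r) \,\bigr\}.
\]
This set is manifestly finite.

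To finish, a direct computation shows $\mathcal{K}_r(x) = d_{T(x)}\alpha(x)/\gcd(\alpha(r), d_{T(x)}\alpha(x))$, and taking $c_0 := \alpha(r)/\gcd(\alpha(r), d_{T(x)}\alpha(x))$, which is itself a positive divisor of $\alpha(r)$, the element $c_0 \cdot t_{T(x)} \in S$ lies in $S(x)$ and yields $(c_0 t_{T(x)}) \cdot x = \mathcal{K}_r(x) \cdot r$. Hence $\mathcal{K}_r^S(x) \mid \mathcal{K}_r(x)$, and combining this with the preceding lemma gives $\mathcal{K}_r^S(x) = \mathcal{K}_r(x)$ for every $x \in R$, as required.
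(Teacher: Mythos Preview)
Your argument is correct and follows the same overall strategy as the paper: use the minimal primes of the reduced Noetherian ring to embed $R$ into $\prod_i R/\pf_i$, show that $\ker\alpha$ is one of the minimal primes with quotient $\ZZ$ and that $r$ lies in all the others, and then build $S$ out of one witness element from each ideal $J_T=\cap_{i\in T}\pf_i$ indexed by subsets $T\subseteq\{2,\dots,n\}$.

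The one point where you diverge from the paper is worth noting. The paper simply picks, for each $T$, an element $y\in Z(T)$ of minimal positive dimension and asserts $x\cdot y=\mathcal{K}_r(x)\,r$; this implicitly uses that any element with vanishing $i$-th coordinate for all $i\geq 2$ already lies in $\ZZ r$, i.e.\ that $\bigcap_{i\geq 2}\pf_i=\ZZ r$, which amounts to $r$ having minimal positive dimension among regular elements. Your extra step of throwing in the scalings $c\cdot t_T$ for all positive divisors $c$ of $\alpha(r)$ is precisely what guarantees the divisibility condition $\alpha(r)\mid\alpha(s)\alpha(x)$ for a general regular $r$, so your construction works without that tacit assumption. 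In the applications the paper has in mind (character rings, Burnside rings, global representation rings) the chosen $r$ is always minimal in this sense, so the two constructions coincide there; your version is simply the cleaner general statement.
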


\begin{proof}
%Since $R$ is finitely generated, its spectrum has finitely many generic points $I_1,\ldots, I_m$. Suppose that $\ker(\alpha)$ belongs to the last $m-k$ components, given by $I_{k+1},\ldots , I_m$. Each quotient $R_i=R/I_i$ is a domain. We identify $R$ with its image under the inclusion $R \to R_1 \times \ldots \times R_m$. \\
Since $R$ is finitely generated, its spectrum has finitely many generic points $I_1,\ldots, I_m$. Suppose that $\ker(\alpha)$ belongs to the first $k$ components, given by $I_1,\ldots , I_k$. Each quotient $R_i=R/I_i$ is a domain. We identify $R$ with its image under the inclusion $R \to R_1 \times \ldots \times R_m$. \\

Let $r = (r_1, \ldots, r_m)$ be the regular element. For each $i\leq k$, the map $\alpha$ factors only through $R/I_i$, so $0\neq \alpha(r)=\bar{\alpha}(r_i)$ and $r_i\neq 0$.\\

In fact, $r=(r_1,0,\ldots, 0)$ and $k=1$. To prove this, consider $x = (x_1, \ldots x_m)\in R$ of dimension $d=\alpha (x)$. Then $r x = d r$ so $r_i x_i = d r_i$ for all $i$. Since $R_i$ is a domain, it follows that if $r_i \neq 0$, then $x_i=d\in R_i$. For each $i>1$, we can find a non-zero element $0\neq y\in \cap_{j\neq i}I_j$. Note that $y=(0, \ldots, 0, y_i,0,\ldots, 0)$ and $y\in I_1 \subseteq \ker (\alpha)$. It follows that $r_i=0$ for $i>1$. \\

%%%Let $r = (r_1, \ldots, r_m)$ be the regular element. In fact, $r=(r_1,\ldots, r_k,0,\ldots, 0)$. To prove this, consider $x = (x_1, \ldots x_m)\in R$ of dimension $d=\alpha (x)$. Then $r x = d r$ so $r_i x_i = d r_i$ for all $i$. Since $R_i$ is a domain, it follows that if $r_i \neq 0$, then $x_i=d\in R_i$. For each $i>k$, we can find a non-zero element $0\neq y\in \cap_{j\neq i}I_j$. Note that $y=(0, \ldots, 0, y_i,0,\ldots, 0)$ and $y\in I_1 \subseteq \ker (\alpha)$. It follows that $r_i=0$. \\
%%%
%%%The same argument shows that $r=(r_1,0,\ldots, 0)$. Indeed, we can assume $r_1\neq 0$ and then use a non-zero element $0\neq y\in \cap_{j\neq i}I_j$ for each $i>1$. It also follows that $r_1=\alpha (r)$ and $R_1$ is a domain of characteristic zero.\\
%%%
%%%In fact, we can conclude that $k=1$. Indeed, suppose $k\geq 2$ and consider $x\in I_2$. Then $\alpha (x)=0$ and $x=(x_1,0,x_3,\ldots)$. It follows that $xr=0$ and $x_1r_1=0$ and $x_1=0$. Thus, $x\in I_1$ and $I_2\subseteq I_1$, contradiction with the fact that $I_i$ are distinct minimal prime ideals. In particular, $\alpha$ factors only through $R/I_1$ so we can write $\alpha(x)=\bar{\alpha}(x_1)$. \\

Notice that the regularity of $r$ means that $Rr = \ZZ r$ and $R_1 r_1 = \ZZ r_1$. Thus, $\ZZ r_1$ is a simple $R_1$-module for the domain $R_1$. It follows that $R_1=\ZZ$ and $\alpha (x) = x_1$ for all $x\in R$. \\

%We begin by showing that the regular element $r$ can be written in the form $(\alpha(r),0,\ldots, 0)$. Let $r = (r_1, \ldots, r_m)$ and $x = (x_1, \ldots x_m)$ be a generator of dimension $d$. Then $r x = d r$ so $r_i x_i = d r_i$ and it follows that if $r_i \neq 0$ then the $i^{\text{th}}$ entry of all generators is their respective dimension. We can therefore assume that the first entry of the regular and all generators is their dimension and all other entries for the regular are zero.\\

%Now, for a subset $I \subset \{2,\ldots,m\}$ let $Z(I) = \{x \in R : x_i = 0 \text{ for all } i \in I\}$. For any $x \in R$, define $I_x$ to be the subset of $\{2,\ldots,m\}$ with $x_i \neq 0$ if and only if $i \in I_x$. Now consider $Z(I_x)$, there exists an element $y \in Z(I_x)$ with minimal positive dimension (not necessarily unique) and by definition, we have $x \cdot y = \mathcal{K}_r(x)r$. Furthermore, observe that all elements $x' \in R$ with $I_{x'} = I_x$ will also satisfy $x' \cdot y = \mathcal{K}_r(x')r$. Thus, define $S \subset R$ to be the finite subset of elements consisting of a minimal positive dimensional element in $Z(J)$ for each subset $J \subset \{2,\ldots,m\}$. Then $S$ satisfies the conditions of the lemma.
Now, for a subset $T \subset \{2,\ldots,m\}$ let $Z(T) = \{x \in R : x_i = 0 \text{ for all } i \in T\}$. For any $x \in R$, define $T(x)$ to be the subset of $\{2,\ldots,m\}$ with $x_i \neq 0$ if and only if $i \in T(x)$. Since $Z(T(x))=\cap_{j\in T(x)}I_j$ is not a subset of $I_1$, there exists an element (not necessarily unique) $y \in Z(T(x))$ with minimal positive dimension. By definition, we have $x \cdot y = \mathcal{K}_r(x)r$. Furthermore, observe that all elements $x' \in R$ with $T(x') = (x)$ will also satisfy $x' \cdot y = \mathcal{K}_r(x')r$. Thus, define $S \subset R$ to be the finite subset of elements consisting of a minimal positive dimensional element in $Z(T)$ for each subset $T \subset \{2,\ldots,m\}$. Then $S$ satisfies the conditions of the proposition.
\end{proof}

\section{Knutson Index for Burnside Rings}\label{knutson for burnside}

Let $G$ be a finite group. The Burnside ring $\Omega(G)$ is generated by isomorphism classes of $G-$sets, with addition given by the disjoint union and multiplication given by the Cartesian product. Recall that any finite $G$-set $X$ may be expressed as a disjoint union of $G-$orbits, say $\displaystyle X = \bigcupdot_i X_i$. By the Orbit-Stabiliser Theorem, we can write $X_i = G/G_i$ where $G_i$ is the stabiliser of some $x_i \in X_i$. Note that choosing a different $x_i' \in X_i$ gives a conjugate of $G_i$. Therefore generators of $\Omega(G)$ are given by sets $G/H$ where $H$ runs over all subgroups of $G$ up to conjugation. In other words, if $S(G)$ denotes a full list of representatives of subgroups of $G$ up to conjugation then 
\[ \Omega(G) = \bigg\{ \sum_{H_i \in S(G)} c_i \cdot  G/H_i\, :\, c_i \in \ZZ \bigg \}.\]

Given a subgroup $H$ and a $G$-set $X$ we define the mark of $H$ on $X$ by $[X,H] = |X^H|$. Since the mark respects both addition and multiplication of $G-$sets and can record all information about $\Omega(G)$ in the table of marks, $T(G)$. There exists a partial ordering on $S(G)$, the set of representatives of subgroups of $G$ up to conjugation, as follows: for each $H,K \in S(G)$ we have $H \preceq K$ whenever $H$ is a subgroup of some conjugate of $K$. Now we choose a total order $S(G)=\{H_1,H_2, \ldots \}$ such that $|H_i|\leq |H_j|$ for $i\leq j$. This total order obviously extends the partial order $\preceq$. The rows of the table are labelled by $G-$sets of the form $G/H_i$ for $H_i \in S(G)$, columns are labelled by subgroups $H_j \in S(G)$, with both labellings following the total order on $S(G)$. The entries are $T(G)_{i,j} = [G/H_i, H_j]$. That is, the entry in position $(i,j)$ is the mark of $H_j$ on $G/H_i$.

\begin{lemma}
The table of marks satisfies the following properties:
\begin{enumerate}[label=\roman*)]
    \item The diagonal entries are $[G/H_i, H_i] = |N_G(H_i)/H_i|$.
    \item The table is lower triangular.
    \item Both the rows and columns form bases of $\QQ^n$, where $n = |S(G)|$. 
\end{enumerate}
\end{lemma}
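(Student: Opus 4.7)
The plan is to treat each part directly from the definition of the mark $[G/H_i, H_j] = |(G/H_i)^{H_j}|$, and to observe that (iii) is a consequence of (i) and (ii).

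For part (i), I would compute the fixed points of $H_i$ acting on $G/H_i$ by left translation. A coset $gH_i$ is fixed by $H_i$ precisely when $hgH_i = gH_i$ for every $h \in H_i$, which rewrites as $g^{-1}H_i g \subseteq H_i$; by finiteness this forces $g^{-1}H_i g = H_i$, i.e.\ $g \in N_G(H_i)$. Hence $(G/H_i)^{H_i} = N_G(H_i)/H_i$, and taking cardinalities gives the claim.

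For part (ii), the same fixed-point computation shows in general that $[G/H_i, H_j] \neq 0$ if and only if some conjugate $gH_jg^{-1}$ lies inside $H_i$, i.e.\ $H_j \preceq H_i$ in the subconjugacy order. I would then argue that for $j > i$ this never holds: by the choice of total order, $|H_j| \geq |H_i|$. If $|H_j| > |H_i|$ then $H_j$ cannot be subconjugate to $H_i$ for cardinality reasons. If $|H_j| = |H_i|$, then $gH_jg^{-1} \subseteq H_i$ with both sides of equal size forces $gH_jg^{-1} = H_i$, making $H_j$ conjugate to $H_i$; since the $H_k$ are a system of representatives this would give $H_j = H_i$, contradicting $i \neq j$. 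Thus $T(G)_{i,j} = 0$ whenever $j > i$, proving the matrix is lower triangular.

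For part (iii), combining (i) and (ii) shows that $T(G)$ is lower triangular with diagonal entries $|N_G(H_i)/H_i| \geq 1$, so its determinant equals $\prod_i |N_G(H_i)/H_i| \neq 0$. Therefore both the rows and the columns of $T(G)$ are linearly independent in $\mathbb{Q}^n$, and since there are exactly $n$ of each, they form bases. The only point that requires any care is the borderline case $|H_i| = |H_j|$ in part (ii); beyond that, the argument is purely a matter of unpacking the definition of the mark.
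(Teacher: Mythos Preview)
Your proposal is correct and follows the same approach as the paper, which likewise reduces everything to the description $(G/H_i)^{H_j} = \{gH_i : H_j gH_i \subset gH_i\}$ and the resulting subconjugacy criterion. Your treatment is simply more detailed than the paper's one-line proof, in particular in handling the equal-size case for part (ii) and spelling out the determinant argument for part (iii).
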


\begin{proof}
All properties follow from the observation that $(G/H_i)^{H_j} = \{gH_i : H_jgH_i \subset gH_i\}$. In particular, note that if $H_j$ is not conjugate to a subgroup of $H_i$ then $T(G)_{i,j} = 0$.
\end{proof}

We now consider Knutson Indices for the Burnside ring. Throughout, we will take the rows of the table as a generating set for $\Omega(G)$, and the dimension homomorphism to be \[\alpha: \Omega(G) \longrightarrow \ZZ;\,\, \alpha(G/H) = [G/H,\{e\}] = |G/H|.\] Observe that for the $G$-set $G/H$ we have $\displaystyle G/H \times G = \bigcupdot_{i=1}^{|G/H|} G = |G/H| \cdot G$. This makes $G = G/\{e\}$ a valid choice for a regular element of $\Omega(G)$, in fact, we assume for the rest of the section that $r = G/\{e\}.$

\begin{proposition} \label{prop32}
%\textcolor{red}{We have to be careful with the conjugate we pick.}
Let $S = \{G/H_1, \ldots G/H_n\}$ be the set consisting of rows of the table. 
Then we have $G/H_i \times G/H_j = m \cdot G/\{e\}$ for some $m$ if and only if 
%\textcolor{red}{There exists $g\in G$ such that} 
$H_i \cap gH_jg^{-1} = \{e\}$ for all $g\in G$.
Furthermore, %%\textcolor{red}{We need to add conjugates to this statement too} 
\[
m = \frac{|G|}{|H_i||H_j|}
\quad \mbox{and} \quad  
%|H_i| \cdot |H_j| = \frac{|G|}{m}$ and 
\mathcal{K}^S_r(G/H_i) = \text{gcd} \left\{ \frac{|G|}{|H_i||H_j|} \ | \ \forall g\in G \ \ H_i \cap gH_jg^{-1} = \{e\} \right\} .
\]
\end{proposition}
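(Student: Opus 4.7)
The plan is to combine the classical double coset decomposition of a product of transitive $G$-sets with the definition of the Knutson subindex $\mathcal{K}_r^S$. The first step is to analyse the diagonal action of $G$ on $G/H_i\times G/H_j$: the orbit through $(H_i, gH_j)$ has stabiliser $H_i\cap gH_jg^{-1}$, so choosing double coset representatives $g\in H_i\backslash G/H_j$ yields the standard decomposition
\[
G/H_i\times G/H_j \;=\; \bigsqcup_{g\in H_i\backslash G/H_j} G/\bigl(H_i\cap gH_jg^{-1}\bigr)
\]
in the Burnside ring. Each summand is of the form $G/\{e\}$ precisely when its stabiliser is trivial, and since $H_i\cap gH_jg^{-1}=\{e\}$ depends only on the double coset of $g$, the condition ``trivial for every double coset representative'' is the same as ``trivial for every $g\in G$''. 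This immediately gives the first equivalence of the proposition.

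Next I would compute the multiplicity $m$ when the condition holds: every summand is isomorphic to $G/\{e\}$, so counting cardinalities gives
\[
m\cdot |G| \;=\; |G/H_i\times G/H_j| \;=\; \frac{|G|^2}{|H_i||H_j|},
\]
whence $m=|G|/(|H_i||H_j|)$.

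Finally, I would unwind the definition of the Knutson subindex to obtain the gcd formula. By construction, $S(G/H_i)$ consists of those rows $G/H_j$ for which $(G/H_i)\cdot (G/H_j)\in \ZZ\, r$, and by the first part this is precisely the set of $H_j$ such that $H_i\cap gH_jg^{-1}=\{e\}$ for all $g\in G$; the corresponding integer in $\mathrm{Ind}(G/H_i)$ is $|G|/(|H_i||H_j|)$. Using the remark following the definition, $\mathcal{K}_r^S(G/H_i)$ is the gcd of these integers, which is exactly the stated formula.

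There is no real obstacle here; the one subtlety to handle carefully is confirming that it suffices to check $H_i\cap gH_jg^{-1}=\{e\}$ on double coset representatives rather than on all $g\in G$, and that the subindex really factors through individual generators of $S$ (as opposed to requiring genuine $\ZZ$-linear combinations), which is guaranteed by the gcd reformulation immediately after the definition.
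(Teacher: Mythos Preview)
Your proposal is correct and follows essentially the same line as the paper: both arguments rest on the fact that the stabiliser of $(hH_i,gH_j)\in G/H_i\times G/H_j$ is $hH_ih^{-1}\cap gH_jg^{-1}$, so all orbits are free if and only if $H_i\cap gH_jg^{-1}=\{e\}$ for every $g$, after which the value of $m$ comes from a cardinality count and the gcd formula from the definition of $\mathcal{K}_r^S$. The only cosmetic difference is that you package the orbit analysis via the double coset decomposition $G/H_i\times G/H_j=\bigsqcup_{g\in H_i\backslash G/H_j} G/(H_i\cap gH_jg^{-1})$, whereas the paper checks the stabilisers directly without naming this formula.
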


\begin{proof}
For the first statement, observe that 
$H_i \cap gH_jg^{-1} = \{e\}$ for all $g\in G$
if and only if
$hH_ih^{-1} \cap gH_jg^{-1} = \{e\}$ for all $g,h\in G$
if and only if
the stabiliser of each $(hH_i, gH_j) \in G/H_i \times G/H_j$ is trivial. \\

For the second statement, taking sizes yields $|G|^2/(|H_i||H_j|) = m |G|$ and $m = {|G|}/{(|H_i||H_j|)}$.
%forward implication, the condition $|H_i| |H_j| = \frac{|G|}{m}$ follows by taking sizes. 
%For all non-trivial $H_s$ we have that $[G/H_i, H_s] = 0$ or $[G/H_j, H_s] = 0$ so either $H_s \not \subset H_i$ or $H_s \not \subset H_j$ by the previous lemma. This implies that $H_s \not \subset H_i \cap H_j$ for all non-trivial subgroups $H_s$. We conclude that $H_i \cap H_j = \{e\}$. 
%The second condition asserts that the stabiliser of $(H_i, gH_j) \in G/H_i \times G/H_j$ is trivial. 
%\\
%
% Conversely, assume that $|H_i| \cdot |H_j| = \frac{|G|}{m}$ and $H_i \cap gH_jg^{-1} = \{e\}$ for all $g\in G$.
% then $[G/H_i \times G/H_j, H_s] \neq 0$ implies 
% \textcolor{red}{NO} $H_s \subset H_i \cap H_j$ and so $H_s = \{e\}$. For $H_s = \{e\}$ we have $[G/H_i \times G/H_j, \{e\}] = |G/H_i| \cdot |G/H_j| = m \cdot |G|$. 
The last claim follows immediately from the definition of $\mathcal{K}^S_r$.
\end{proof}

For the remainder of the section, we explore the connection between the Knutson subindex and the splitting of exact sequences of Sylow $p$-subgroups. Before embarking on this, we introduce some notation. Given an exact sequence $1 \to N \to G \to \Gamma \to 1$ and a Sylow $p-$subgroup $G_p < G$, we have Sylow $p-$subgroups $N_p \coloneqq N \cap G_p$ of $N$ and $\Gamma_p \coloneqq (G_pN)/N$ of $\Gamma$ and the short exact sequence $1 \to N_p\to G_p \to \Gamma_p \to 1$.%% may change under conjugation by $g\in G$.% (even if $gG_pg^{-1}=G_p$) but all the possible sequences are split or non-split simultaneously.

\begin{lemma} \label{lemma p-split}
The following statements hold for a short exact sequence $1 \to N \to G \to \Gamma \to 1$.
\begin{enumerate}[label=\roman*)]
    \item Fix a prime $p$. All exact sequences $1 \to N_p\to G_p \to \Gamma_p \to 1$ are split or non-split simultaneously.
    \item If the original sequence splits, then the sequence $1 \to N_p\to G_p \to \Gamma_p \to 1$ splits for any prime $p$ and any Sylow $p-$subgroup $G_p < G$. 
\end{enumerate}
%
%If a short exact sequence $1 \to N \to G \to \Gamma \to 1$ splits, then the sequence of Sylow $p$-subgroups $1 \to N_p\to G_p \to \Gamma_p \to 1$ splits for any prime $p$ and any Sylow $p-$subgroup $G_p < G$.
\end{lemma}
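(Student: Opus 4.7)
\medskip

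The plan is to treat the two parts essentially independently, with part (i) a soft transport-of-structure argument and part (ii) a concrete Sylow-theoretic construction.

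\medskip

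For part (i), I would use the fact that any two Sylow $p$-subgroups $G_p, G_p'$ of $G$ are conjugate: choose $g \in G$ with $G_p' = gG_pg^{-1}$. Since $N$ is normal in $G$, conjugation by $g$ carries $N_p = N \cap G_p$ to $N \cap G_p' = N_p'$, and the induced map on the quotient $G/N$, which is inner conjugation by $gN$, carries $\Gamma_p = G_pN/N$ onto $\Gamma_p' = G_p'N/N$. Thus conjugation by $g$ yields an isomorphism of short exact sequences of groups. Splitting is invariant under such isomorphisms, so the sequence $1 \to N_p \to G_p \to \Gamma_p \to 1$ splits for one Sylow $p$-subgroup if and only if it splits for all.

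\medskip

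For part (ii), suppose the original sequence splits, so we may write $G = N \rtimes H$ for a complement $H \leq G$ mapping isomorphically onto $\Gamma$. Fix a prime $p$ and a Sylow $p$-subgroup $H_p$ of $H$. The key step is to produce a Sylow $p$-subgroup of $G$ that respects the semidirect product decomposition. Since $H_p$ normalises $N$, it acts on the set $\operatorname{Syl}_p(N)$. Because $|\operatorname{Syl}_p(N)|$ is coprime to $p$ while $|H_p|$ is a $p$-power, a standard orbit-counting argument gives an $H_p$-fixed Sylow $p$-subgroup $N_p \leq N$; that is, $H_p$ normalises $N_p$. Then $G_p := N_p H_p$ is a subgroup of $G$, its order equals $|N_p|\,|H_p| = |N|_p|H|_p = |G|_p$ (using $N_p \cap H_p \subseteq N \cap H = 1$), so $G_p$ is a Sylow $p$-subgroup of $G$. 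Moreover $N \cap G_p = N_p(N \cap H_p) = N_p$, and the projection $G_p \to \Gamma$ sends $H_p$ isomorphically onto $G_pN/N = \Gamma_p$, exhibiting $H_p$ as a complement to $N_p$ in $G_p$. By part (i), this splitting transfers to every Sylow $p$-subgroup of $G$, and the argument is uniform in $p$.

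\medskip

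The main obstacle is the construction of $G_p$ in part (ii): one needs the compatible choice of a Sylow $p$-subgroup of $N$ normalised by $H_p$, rather than an arbitrary one, in order to obtain a subgroup of $G$ of the right order. Everything else is bookkeeping with normality of $N$ and Sylow's theorems.
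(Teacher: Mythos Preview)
Your proof is correct. Part~(i) is the same conjugation argument the paper gives. For part~(ii) the paper takes a slightly shorter route: having fixed a complement $K$ to $N$ in $G$ and a Sylow $p$-subgroup $K_p \leq K$, it simply invokes Sylow's theorem to embed $K_p$ in some Sylow $p$-subgroup $G_p$ of $G$, and then observes that $K_p$ is a complement to $N_p = N \cap G_p$ there (using $K_p \cap N \subseteq K \cap N = 1$ and the standard fact that $N \cap G_p$ is Sylow in $N$). Your construction of $G_p$ as $N_p H_p$, with $N_p$ chosen to be $H_p$-invariant via the orbit-counting fixed-point argument, is equally valid and makes the internal structure $G_p = N_p \rtimes H_p$ explicit; the cost is the extra fixed-point step, which the paper's embed-and-check approach sidesteps.
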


\begin{proof}
Suppose one of the sequences of Sylow $p-$subgroups splits. Then there exists $K_p$, a complementary subgroup to $N_p$ in $G_p$. Any other sequence has the form $1 \to gN_pg^{-1} \to gG_pg^{-1} \to gK_pg^{-1} \to 1$. Then $gK_pg^{-1}$ is complementary to $gN_pg^{-1}$ in $gG_pg^{-1}$. \\

As the sequence splits, there exists a subgroup $K$, complementary to $N$ in $G$. Its Sylow $p-$subgroup $K_p$ sits inside a Sylow $p-$subgroup $G_p$ of $G$. The corresponding sequence $1 \to N_p\to G_p \to \Gamma_p \to 1$ splits. By part~i), all of them split.

% of $G$ such that $\Gamma$ can be identified with $K$ and $\Gamma_p$ can be identified with a Sylow $p$-subgroup $K_p$ of $K$ which is contained in $G_p$. Then we have $|N|\cdot|K| = |G|$ and $N \cap K = \{e\}$. By definition, we have $N_p = N \cap G_p$, and $\Gamma_p = (G_pN)/N$ is identified with $K_p < G_p$. It follows that $|N_p|\cdot|K_p| = G_p$ and $K_p \cap N_p = \{e\}$. Thus the exact sequence of Sylow p-subgroups splits.

%As the sequence splits, there exists a subgroup $K$ of $G$ such that $\Gamma$ can be identified with $K$ and $\Gamma_p$ can be identified with a Sylow $p$-subgroup $K_p$ of $K$ which is contained in $G_p$. Then we have $|N|\cdot|K| = |G|$ and $N \cap K = \{e\}$. By definition, we have $N_p = N \cap G_p$, and $\Gamma_p = (G_pN)/N$ is identified with $K_p < G_p$. It follows that $|N_p|\cdot|K_p| = G_p$ and $K_p \cap N_p = \{e\}$. Thus the exact sequence of Sylow p-subgroups splits.
\end{proof}

\begin{lemma} \label{lemma index}
If a short exact sequence $1 \to N \to G \to \Gamma \to 1$ splits then $\mathcal{K}^S_r(G/N) = 1$.
\end{lemma}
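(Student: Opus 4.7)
The plan is to apply Proposition~\ref{prop32} with $H_i = N$ and $H_j = K$, where $K$ is a complement to $N$ guaranteed by the splitting. Since $K$ has a conjugate appearing in the chosen list $S(G)$, and both the hypothesis $H_i \cap gH_jg^{-1}=\{e\}$ and the quantity $|G|/(|H_i||H_j|)$ are invariant under conjugating $H_j$, it is harmless to work directly with $K$. The goal will be to show that the pair $(N,K)$ satisfies the triviality-of-conjugate-intersection condition and contributes the value $m=1$ to the set whose gcd defines $\mathcal{K}^S_r(G/N)$.

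The first key step is to compute the integer $m$. Since $G = NK$ and $N \cap K = \{e\}$, one has $|G| = |N|\cdot|K|$, so by Proposition~\ref{prop32}
\[
m \;=\; \frac{|G|}{|N|\,|K|} \;=\; 1.
\]
The second key step is to verify that $N \cap gKg^{-1} = \{e\}$ for every $g\in G$. Using $G=NK$, write $g = nk$ with $n\in N$, $k\in K$, so that $gKg^{-1} = nKn^{-1}$; thus it suffices to check $N \cap nKn^{-1} = \{e\}$ for all $n\in N$. If $x$ lies in this intersection, then $x = nk'n^{-1}$ for some $k' \in K$, and normality of $N$ gives $k' = n^{-1}xn \in N$, forcing $k' \in N\cap K = \{e\}$ and hence $x=e$.

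Combining these two observations with Proposition~\ref{prop32} yields $(G/N)\cdot (G/K) = 1\cdot r$ in $\Omega(G)$, so $1 \in \mathrm{Ind}(G/N)$, which forces $\mathcal{K}^S_r(G/N) = \gcd(\mathrm{Ind}(G/N)) = 1$. I do not anticipate any genuine obstacle here: the only mildly delicate point is the reduction from arbitrary $g\in G$ to $n\in N$ via the decomposition $g=nk$, together with an appeal to normality of $N$ to swap $n^{-1}Nn$ back to $N$. Everything else is a direct substitution into the formulas already provided by Proposition~\ref{prop32} and the definition of $\mathcal{K}^S_r$.
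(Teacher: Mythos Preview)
Your proof is correct and follows the same approach as the paper: both invoke Proposition~\ref{prop32} with the complement supplied by the splitting, observe that $|G|=|N||K|$ forces $m=1$, and conclude that $1\in\mathrm{Ind}(G/N)$. The only cosmetic difference is that the paper places $N$ in the role of $H_j$, so that normality gives $H\cap gNg^{-1}=H\cap N=\{e\}$ immediately, whereas you place $N$ as $H_i$ and verify $N\cap gKg^{-1}=\{e\}$ via the decomposition $g=nk$; both are valid and your argument could in fact be shortened by conjugating by $g^{-1}$ and using $g^{-1}Ng=N$.
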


\begin{proof}
%\textcolor{red}{Add reference to Prop 3.2 and say that $N$ is normal} 
We use Proposition~\ref{prop32} where $H_j=N$ is normal.
The short exact sequence $1 \to N \to G \to \Gamma \to 1$ splits if and only if there exists a subgroup $H$ of $G$ such that $H \cong \Gamma$ and $N \cap H = \{e\}$ and this implies that $\mathcal{K}^S_r(G/N) = 1$.
\end{proof}

\begin{lemma} \label{nilpotent}
Let $G$ be nilpotent and $N \lhd G$ a normal subgroup. Then the following are equivalent:
\begin{enumerate}[label=\roman*)]
    \item $1 \to N \to G \to \Gamma\to 1$ splits.
    \item $1 \to N_p \to G_p \to \Gamma_p \to 1$ splits for all primes $p$.
    \item $\mathcal{K}_r^S(G/N)=1$.
\end{enumerate}
\end{lemma}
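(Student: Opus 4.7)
The plan is to run a three-step cycle (i) $\Rightarrow$ (iii) $\Rightarrow$ (ii) $\Rightarrow$ (i), exploiting the previous two lemmas together with the fact that a finite nilpotent group decomposes as the direct product of its Sylow subgroups.

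First, (i) $\Rightarrow$ (iii) is already Lemma~\ref{lemma index} and does not use nilpotence. Next, for (ii) $\Rightarrow$ (i): since $G$ is nilpotent, $G = \prod_p G_p$, and because $N$ is a normal subgroup of a nilpotent group it is itself nilpotent and satisfies $N = \prod_p N_p$ with $N_p = N \cap G_p$; similarly $\Gamma = \prod_p \Gamma_p$. The ambient sequence is therefore the direct product of the Sylow sequences $1 \to N_p \to G_p \to \Gamma_p \to 1$, so simultaneous splittings of the factors assemble into a splitting of the whole sequence.

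The main content is (iii) $\Rightarrow$ (ii). By Proposition~\ref{prop32}, since $N$ is normal the condition $N \cap gH_jg^{-1} = \{e\}$ for all $g$ reduces to $H_j \cap N = \{e\}$, so
\[
\mathcal{K}_r^S(G/N) = \gcd\Bigl\{\tfrac{|G|}{|N|\,|H_j|} \; : \; H_j \leq G,\ H_j \cap N = \{e\}\Bigr\}.
\]
Assuming this gcd equals $1$, for each prime $p$ dividing $|\Gamma| = |G|/|N|$ there must exist a subgroup $H \leq G$ with $H \cap N = \{e\}$ whose index $|G|/(|N|\,|H|)$ is coprime to $p$; equivalently, the $p$-part of $|H|$ equals $|\Gamma_p|$. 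Now I would use nilpotence to pass to the Sylow level: writing $H = \prod_q H_q$ and noting that $G_p$ is normal in $G$, the Sylow $p$-subgroup $H_p = H \cap G_p$ satisfies $H_p \cap N_p \subseteq H \cap N = \{e\}$, while $|H_p| = |H|_p = |\Gamma_p| = |G_p|/|N_p|$. Hence $H_p N_p = G_p$ and $H_p$ is a complement to $N_p$ in $G_p$, so $1 \to N_p \to G_p \to \Gamma_p \to 1$ splits. For primes $p$ not dividing $|\Gamma|$, the sequence is trivially split because $N_p = G_p$ and $\Gamma_p$ is trivial.

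The only mild obstacle is the bookkeeping check that the witnessing subgroup $H$ for each prime $p$ genuinely yields a Sylow complement after intersection with $G_p$; this rests squarely on the fact that in a nilpotent group each subgroup is the direct product of its Sylow components, and that $G_p \trianglelefteq G$. Everything else is a direct invocation of the previous two lemmas and of Proposition~\ref{prop32}.
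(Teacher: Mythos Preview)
Your argument is correct. The logical skeleton matches the paper's---both use Lemma~\ref{lemma index} for (i)~$\Rightarrow$~(iii) and the direct-product decomposition $G=\prod_p G_p$ to pass between the global sequence and the Sylow sequences---but the closing step is organised differently.

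The paper first reduces the entire lemma to the case where $G$ is a $p$-group (the nilpotent case then follows because every subgroup of $G$ is a product of its Sylow parts, so all three conditions factor over the primes). For a $p$-group the crux is the observation that every element of $\mathrm{Ind}(G/N)$ is a power of $p$, whence $\gcd\bigl(\mathrm{Ind}(G/N)\bigr)=\min\bigl(\mathrm{Ind}(G/N)\bigr)$; thus $\mathcal{K}_r^S(G/N)=1$ forces $1\in\mathrm{Ind}(G/N)$, which produces a complement $K$ to $N$ outright. Your (iii)~$\Rightarrow$~(ii) instead stays inside the full nilpotent $G$: from $\gcd=1$ you pick, for each prime $p$, a single witness $H$ whose index is coprime to $p$, and then intersect with the normal Sylow $G_p$ to obtain the complement $H_p$ to $N_p$. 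This avoids the preliminary reduction and the ``$\gcd=\min$'' trick, at the cost of handling one prime at a time and checking that $H\cap G_p$ really is a Sylow $p$-subgroup of $H$ (which, as you note, follows from $G_p\trianglelefteq G$). Both routes are short; the paper's is marginally slicker once the $p$-group reduction is accepted, while yours is self-contained and does not lean on the product formula $\mathcal{K}_r^S(G/N)=\prod_p\mathcal{K}_r^S(G_p/N_p)$ that the paper establishes only afterwards.
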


\begin{proof}
We give the proof when $G$ is a $p-$group. The result for nilpotent groups follows by writing it as a product of its Sylow $p-$subgroups. For $G$ a $p-$group $i)$ and $ii)$ are clearly equivalent. By Lemma~\ref{lemma index} it is sufficient to show that $iii)$ implies $i)$. As $G$ is a $p-$group, we have by dimension considerations that all elements of $\text{Ind}(G/N)$ are powers of $p$. It follows that $\gcd\big(\text{Ind}(G/N)\big) = \min\big(\text{Ind}(G/N)\big)$. Therefore, if $\mathcal{K}_r^S(G/N)=1$ then there exists $K < G$ such that $|K||N| = |G|$ and $K \cap N = \{e\}$ i.e. $G = N \rtimes K$ and so $1 \to N \to G \to \Gamma\to 1$ splits.
\end{proof}

\begin{remark}\label{converse_do_not_hold}
Note that the converses of Lemma~\ref{lemma p-split}({\em ii}) and Lemma~\ref{lemma index} do not hold in general. The smallest counterexample to both is the unique non-split extension $Q_8 \cdot C_6$, which is the SmallGroup$(48,33)$ in the GAP library. Further counterexamples can be found in a table of the appendix. In fact, such a group being a counterexample to the converses of both lemmas is no coincidence, as we will see in Theorem \ref{main_th}. %First, we state a proposition and a lemma.
\end{remark}

Note that for all counterexamples in our table, there is a prime $p$ such that $p^4$ divides the order of $G$. This is because of the following proposition that easily follows from Shemetkov's Theorem \cite{Sh}.

\begin{proposition}\label{Shemetkov}
Suppose $1 \to N \to G \to \Gamma \to 1$ is a non-split exact sequence such that $1 \to N_p \to G_p \to \Gamma_p \to 1$ splits for all primes $p$. Then there exists a prime $q$ such that $q^4$ divides $|G|$.    
\end{proposition}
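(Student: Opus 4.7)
The plan is to deduce the proposition as the contrapositive of Shemetkov's theorem \cite{Sh}. In the form needed here, Shemetkov's theorem provides the following local-to-global splitting criterion: if $N \trianglelefteq G$, every Sylow subextension $1 \to N_p \to G_p \to \Gamma_p \to 1$ splits, and $|G|$ is not divisible by $p^4$ for any prime $p$, then the extension $1 \to N \to G \to \Gamma \to 1$ itself splits. The contrapositive of this statement is precisely Proposition~\ref{Shemetkov}.

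The proof therefore reduces to two verification steps. First, one recalls the precise statement of Shemetkov's theorem from \cite{Sh} and translates it into the language of short exact sequences used in this section; this amounts to identifying the splitting of the exact sequence $1 \to N \to G \to \Gamma \to 1$ with the existence of a complement to $N$ in $G$, and analogously for the Sylow subextensions, whose splitting is already known by Lemma~\ref{lemma p-split} to be intrinsic to the pair $(G,N)$ and independent of the choice of Sylow $p$-subgroup $G_p$. Second, one invokes the theorem with the given hypotheses: we assume all Sylow subextensions split, and argue by contradiction that if $q^4 \nmid |G|$ for every prime $q$, then the full extension would split, contrary to assumption.

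The main obstacle is bibliographical rather than mathematical: one must confirm that \cite{Sh} presents the theorem with the $p^4$ bound stated explicitly in the form above. If the original formulation is phrased differently, for instance in the language of Hall complements or of $\pi$-solvable groups, a short reformulation may be required, but no new conceptual difficulty is expected. As a fallback, one could give a direct inductive proof: take a minimal counterexample $(G, N)$, reduce to a minimal normal subgroup $M \leq N$ which is necessarily elementary abelian of exponent $p$ for some prime $p$, apply Gasch\"utz's theorem to relate complements of $M$ in $G$ to complements in the preimage of a Sylow $p$-subgroup of $G/M$, and then verify by a case analysis on $p$-groups of order at most $p^3$ that any obstructing cohomology class in $H^2(\Gamma, M)$ must vanish, forcing the sought contradiction.
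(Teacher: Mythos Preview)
Your proposal is correct and matches the paper's approach: the paper does not give a standalone proof of this proposition but simply states that it ``easily follows from Shemetkov's Theorem \cite{Sh}'' (and credits Benjamin Sambale for pointing this out), which is exactly the contrapositive argument you outline. Your additional remarks about reformulation and the fallback Gasch\"utz-style induction go beyond what the paper provides, but the core derivation is identical.
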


We remark that this proposition was posed as a question in an earlier version of the paper (and answered by Benjamin Sambale) \cite{Sa}.

\begin{lemma}
Let $N$ be a normal subgroup of $G$. Then 
$ \displaystyle \mathcal{K}_r^S(G/N) = \prod_p \mathcal{K}_r^S(G_p/N_p)$.
\end{lemma}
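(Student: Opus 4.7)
The plan is to establish the $p$-local identity $\mathcal{K}_r^S(G/N)_p = \mathcal{K}_r^S(G_p/N_p)$ for each prime $p$, where $(\cdot)_p$ denotes the $p$-part of a positive integer. Since a positive integer factors as the product of its $p$-parts and $\mathcal{K}_r^S(G_p/N_p) = 1$ for primes $p$ not dividing $|G|$ (where $G_p = N_p = \{e\}$), this $p$-local claim implies the lemma immediately.

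First, I would simplify Proposition~\ref{prop32} using the normality of $N$. Since $gNg^{-1} = N$, one has $N \cap gHg^{-1} = g(N \cap H)g^{-1}$, and so the condition $N \cap gHg^{-1} = \{e\}$ for all $g \in G$ collapses to $N \cap H = \{e\}$. Combined with the fact that $NH$ is then a subgroup of order $|N||H|$, this yields the cleaner formula
$$\mathcal{K}_r^S(G/N) = \gcd\bigl\{[G:NH] : H \leq G,\ H \cap N = \{e\}\bigr\},$$
with the analogous formula for $\mathcal{K}_r^S(G_p/N_p)$ obtained by replacing $G$, $N$, $H$ by $G_p$, $N_p$, and a subgroup $K \leq G_p$ respectively.

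For the divisibility $\mathcal{K}_r^S(G/N)_p \mid \mathcal{K}_r^S(G_p/N_p)$, I would observe that any $K \leq G_p$ with $K \cap N_p = \{e\}$ is automatically a subgroup of $G$ with $K \cap N = K \cap G_p \cap N = K \cap N_p = \{e\}$. Hence $\mathcal{K}_r^S(G/N)$ divides $[G:NK]$, and taking $p$-parts gives $\mathcal{K}_r^S(G/N)_p \mid [G:NK]_p = [G_p:N_pK]$; the gcd over admissible $K$ then divides $\mathcal{K}_r^S(G_p/N_p)$. Conversely, given $H \leq G$ with $H \cap N = \{e\}$, I would pick a Sylow $p$-subgroup $H_p$ of $H$ and conjugate $H$ in $G$ to arrange $H_p \leq G_p$ (this conjugation preserves the intersection with the normal subgroup $N$). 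Then $H_p \cap N_p = H_p \cap N = \{e\}$, and the key index identity $[G_p:N_pH_p] = |G|_p/(|N|_p|H|_p) = [G:NH]_p$ shows that $\mathcal{K}_r^S(G_p/N_p)$ divides $[G:NH]_p$; taking the gcd over $H$ gives $\mathcal{K}_r^S(G_p/N_p) \mid \mathcal{K}_r^S(G/N)_p$.

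The main obstacle is really just the bookkeeping around $p$-parts of indices: verifying that $[G:NH]_p = [G_p:N_pH_p]$ holds once the Sylow $p$-subgroup of $H$ has been conjugated inside $G_p$, which rests on the identities $|G|_p = |G_p|$, $|N|_p = |N_p|$, and $|H|_p = |H_p|$. Everything else is a direct manipulation of the gcd formula derived from Proposition~\ref{prop32}.
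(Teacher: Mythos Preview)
Your proposal is correct and follows essentially the same approach as the paper: both arguments reduce to the $p$-local identity $\mathcal{K}_r^S(G/N)_p = \mathcal{K}_r^S(G_p/N_p)$ and prove the two divisibilities by (i) viewing a subgroup $K\leq G_p$ with $K\cap N_p=\{e\}$ as a subgroup of $G$ with $K\cap N=\{e\}$, and (ii) passing from $H\leq G$ with $H\cap N=\{e\}$ to a Sylow $p$-subgroup $H_p$ conjugated inside $G_p$. Your explicit gcd reformulation $\mathcal{K}_r^S(G/N)=\gcd\{[G:NH]:H\cap N=\{e\}\}$ and the identity $(\gcd_i a_i)_p=\gcd_i(a_i)_p$ make the bookkeeping a little cleaner than the paper's version, but the underlying ideas are identical.
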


\begin{proof}
We want to show that $\mathcal{K}_r^S(G_p/N_p)$ is equal to the $p-$part of $\mathcal{K}_r^S(G/N)$ for all primes $p$. Recall that for a $G$-set $G/N$ we have $S(G/N) = \big\{ K < G : N \cap K = \{e\}\big\}$ and $\text{Ind}(G/N) = \big\{m : G/N \times G/K = m \cdot G/\{e\} \text{ for } K \in S(G/N) \big\}$. Let $S(G/N) = \{K_1,\dots, K_r\}$ and $\text{Ind}(G/N) = \{m_1,\dots,m_r\}$.\\

We begin by showing that $\mathcal{K}_r^S(G_p/N_p)$ divides the $p$-part of $\mathcal{K}_r^S(G/N)$ for all primes $p$. So, fix a prime $p$ and let $d \in \ZZ_{\geq 0}$ be the largest positive integer such that $p^d$ divides $\mathcal{K}_r^S(G/N)$. Then there exists an $m_i \in \text{Ind}(G/N)$ such that $p^d$ divides $m_i$ but $p^{d+1}$ does not. Assume, without loss of generality, that $i=1$. Now, since $N \cap K_i = \{e\}$ for all $i$, we have $N_p \cap (K_i \cap G_p) = \{e\}$ and so $\{K_1 \cap G_p,\dots,K_r \cap G_p \} \subset S(G_p/N_p)$. As $\mathcal{K}_r^S(G_p/N_p)$ does not depend on which conjugate of $G_p$ (i.e. on which Sylow $p-$ subgroup) we choose we may assume that $G_p$ was chosen so that there exists a Sylow $p-$subgroup $(K_1)_p < K_1$ such that $(K_1)_p < G_p$. Then $(K_1)_p = K_1 \cap G_p$ and we have $\frac{|G_p|}{|N_p||K_1 \cap G_p|} = p^d$. Therefore we have $p^d \in \text{Ind}(G/N)$ which implies that $\mathcal{K}_r^S(G_p/N_p)$ divides the $p-$part of $\mathcal{K}_r^S(G/N)$.\\

Conversely, we show that $p^d$ divides $\mathcal{K}_r^S(G/N)$. Assume that $\mathcal{K}_r^S(G/N) = p^c$ with $c \in \ZZ_{\geq 0}$. Then there exists $L_p < G_p$ such that $L_p \cap G_p = \{e\}$ and $|N_p||L_p| = \frac{|G_p|}{p^c}$, or equivalently, $G_p/L_p \times G_p/N_p = p^c G_p/\{e\}$. 

Now consider $G/N \times G/L_p$; we have $N \cap L_p = \{e\}$ and $|N||L_p| = \frac{|G|}{p^c \cdot n}$ where $\gcd(p,n) = 1$. That is, $G/N \times G/L_p = p^c \cdot n G/\{e\}$ which implies that $p^c \cdot n \in \text{Ind}(G/N)$. Thus $p^d$ divides $p^c$ and so $\mathcal{K}_r^S(G_p/N_p)$ is equal to the $p$-part of $\mathcal{K}_r^S(G/N)$ for all primes $p$, as required.
\end{proof}

The following theorem shows that an extension of a group is a counterexample to the converse of Lemma~\ref{lemma p-split}({\em ii}) if and only if it is a counterexample to the converse of Lemma~\ref{lemma index}.

\begin{theorem} \label{main_th}
Let $1 \to N \to G \to \Gamma \to 1$ be an exact sequence. Then $\mathcal{K}_r^S(G/N) = 1$ if and only if the sequence $1 \to N_p \to G_p \to \Gamma_p \to 1$ splits for all primes $p$.
\end{theorem}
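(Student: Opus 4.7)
The plan is to reduce the theorem to the nilpotent case already handled in Lemma \ref{nilpotent}, using the multiplicative formula from the lemma immediately preceding the theorem. Concretely, I would invoke the factorization
\[
\mathcal{K}_r^S(G/N) \;=\; \prod_{p} \mathcal{K}_r^S(G_p/N_p),
\]
which, since each factor is a non-negative integer and (when finite) a power of $p$, immediately gives that $\mathcal{K}_r^S(G/N) = 1$ is equivalent to $\mathcal{K}_r^S(G_p/N_p) = 1$ for every prime $p$. Primes not dividing $|G|$ give trivial factors and can be ignored.

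Next I would fix a prime $p$ and pass to the $p$-group $G_p$. Because $N$ is normal in $G$, its intersection $N_p = N \cap G_p$ is normal in $G_p$, and the second isomorphism theorem yields a canonical isomorphism $G_p/N_p \cong (G_p N)/N = \Gamma_p$, so the short exact sequence $1 \to N_p \to G_p \to \Gamma_p \to 1$ is exactly the quotient sequence for $N_p \lhd G_p$. Since $G_p$ is a $p$-group it is nilpotent, so Lemma \ref{nilpotent} applies with $G$ replaced by $G_p$ and $N$ replaced by $N_p$, giving the equivalence $\mathcal{K}_r^S(G_p/N_p) = 1 \iff 1 \to N_p \to G_p \to \Gamma_p \to 1 \text{ splits}$.

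Combining the two equivalences delivers the theorem: $\mathcal{K}_r^S(G/N) = 1$ if and only if $\mathcal{K}_r^S(G_p/N_p) = 1$ for all $p$, if and only if each Sylow sequence $1 \to N_p \to G_p \to \Gamma_p \to 1$ splits. There is no genuine obstacle here; the substance of the argument is entirely packed into the preceding two lemmas. The only point requiring a brief mention is that by Lemma \ref{lemma p-split}(i) the splitting of the Sylow sequence does not depend on the choice of Sylow $p$-subgroup $G_p$, so the statement for a single chosen $G_p$ suffices to conclude ``for all primes $p$''.
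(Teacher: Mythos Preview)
Your proposal is correct and follows essentially the same route as the paper: both arguments hinge on the preceding lemma (that $\mathcal{K}_r^S(G_p/N_p)$ is the $p$-part of $\mathcal{K}_r^S(G/N)$) together with the observation that for a $p$-group the subindex equals $1$ exactly when the sequence splits. The only cosmetic difference is that you invoke Lemma~\ref{nilpotent} for that last step, whereas the paper re-derives it inline via the remark that every element of $\mathrm{Ind}(G_p/N_p)$ is a power of $p$, so $\gcd = \min$.
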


\begin{proof}
Let $p$ divide $\mathcal{K}^S_r(G/N)$, then by the previous lemma $p$ also divides $\mathcal{K}^S_r(G_p/N_p)$ and so there does not exist $K_p \leq G_p$ such that $|K_p||N_p| = |G_p|$ and $K_p \cap N_p = \{ e \}$. Conversely, let $p$ be such that $1 \to N_p \to G_p \to \Gamma_p \to 1$ does not split. Then $1 \notin$ Ind$(G_p/N_p)$ and all elements of Ind$(G_p/N_p)$ are powers of $p$, thus $p$ divides $\mathcal{K}^S_r(G_p/N_p)$ and we conclude that $p$ divides $\mathcal{K}^S_r(G/N)$.
\end{proof}

\section{The Global Representation Ring and its Knutson Index}

Let $G$ be a finite group with $H \leq G$ a subgroup. Let $V_G(G/H)$ denote the set of $G$-equivariant vector bundles on $G/H$. That is, for each $xH \in G/H$ assign a vector space $V_{xH}$ along with an action of $G$ given by $g \cdot V_{xH} = V_{gxH}$. Note that $h \cdot V_{H} = V_{H}$ and so the fibre at the identity is a $H$-module. In fact, this vector bundle is determined by its fibre at the identity coset, and so we have a one-to-one correspondence between $G$-equivariant vector bundles on $G/H$ and representations of $H$ \cite{Se}.\\

Now, given a normal subgroup $N \lhd G$, with quotient $\Gamma = G/N$ we consider all $G$-equivariant vector bundles on finite $\Gamma$-sets. Recall from Section \ref{knutson for burnside} that all finite $\Gamma$-sets are given by disjoint unions of sets of the form $\Gamma/K$ where $K$ is a subgroup of $\Gamma$. By the third isomorphism theorem, any $\Gamma$-set $\Gamma/K$ is isomorphic to $G/K'$ where $K'$ is the preimage of $K$ under the natural quotient $G \to \Gamma$. It follows that the $G$-equivalent vector bundles on finite $\Gamma$-sets are generated by $G-$equivariant vector bundles on $G/H$ where $H$ is a subgroup of $G$ up to conjugation which contains $N$. \\

Given a subgroup $N \lhd H \leq G$ and a $G$-equivariant vector bundle on $G/H$ (i.e. a $H$-representation) $V$ we have a notion of a mark on it. Namely, let $N \lhd K \leq G$ be another subgroup of $G$ containing $N$, and let $k \in K$, we define the mark $(G/H,V)(K,k)$ by 
\[(G/H,V)(K,k) = \sum_{g \in (G/H)^K } \chi_V(g^{-1}kg)\]
where, by a slight abuse of notation, the sum over all $g \in (G/H)^K$ means the sum over representatives in $G$ for each element in $(G/H)^K$. Furthermore, we can define addition and multiplication on the set of pairs $(G/H,V)$. If $(G/H,V)$ and $(G/K,W)$ are two pairs then the addition is given by 
\[(G/H,V) + (G/K,W) = (G/H \cupdot G/K, V \oplus W)\]
and multiplication is given by 
\[(G/H,V) \cdot (G/K, W) = (G/H \times G/K, V \otimes W).\]
These two operations give us a ring structure, however, this ring is not quite what we want. We also wish to quotient out the ideal generated by elements of the form $(G/H \cupdot G/H, V) - (G/H,V \oplus V)$. We call the resulting ring the reduced global representation ring $R(G,N)$. We remark that in the case $N = \{e\}$ we recover the global representation ring discussed by Witherspoon \cite{W}, which is also the same as the generalised Burnside ring of $G$ with respect to the functor that associates its character ring to a subgroup $H$ \cite{GRR}.\\

Now, just as in the case of the Burnside ring, we may store this data in a table. Rows are labelled by pairs consisting of a subgroup $H$ of $G$ containing $N$ along with a $H-$representation. Columns will be labelled by subgroups $K$ of $G$ containing $N$ along with an element of $K$. We adopt the convention that subgroups should be listed in increasing size, so the identity subgroup comes first, and the whole group is last. If two or more subgroups have the same size they can be listed in any order. Elements within subgroups can be listed in any order, so long as the identity comes first. The representations should be listed with increasing dimension, and the trivial representation should appear first. \\

However, we do not need to store all this data in a table, it is straightforward to check that if $k,k' \in K$ are conjugate by an element in $N_G(K)$ then $(G/H,V)(K,k) = (G/H,V)(K,k')$ for all subgroups $H$ and $H$-representations $V$. Thus it is enough to label columns by the subgroups $K$ containing $N$ and all elements of $K$ up to $N_G(K)$ conjugation. In a similar vein if $V$ and $V'$ are two $H$-representations which are equal up to $N_G(H)$-conjugation, that is, there exists a $x \in N_G(H)$ such that $\chi_V(x^{-1}hx) = \chi_{V'}(h)$ for all $h \in H$ then all marks will be equal. Thus, it is enough to label rows by subgroups $H$ of $G$ containing $N$ along with representations of $H$ up to $N_G(H)$-conjugation. We denote the table of the reduced representation ring $R(G,N)$ by $T(G,N)$. Finally, note that once we have fixed a subgroup $K$ and element $k \in K$ the mark \[(-,-)(K,k): \{\text{Subgroups }H: N \lhd H\} \times \{\text{Irreducible representations of }H\text{ up to conjugation by }G\} \longrightarrow \mathbb{C}\] is a ring homomorphism, therefore multiplication in the global representation ring corresponds to multiplication of rows in the global table. 

\begin{example}
Let $G = D_8 = \langle r, s : r^4 = e, s^2 = e, rs = sr^{-1} \rangle$ and let $N = C_2 = \langle r^2 \rangle$. Then the subgroups of $G$ containing $N$ are $N$, $C_4 = \langle r \rangle$, $C_2^2 = \langle r^2, s \rangle$, $\widehat{C_2^2}= \langle r^2, rs \rangle$ and $D_8$. Now, all the subgroups above are normal and so their normalisers are $G$. One calculates that representatives for the classes in each subgroup up to conjugation by $G$ are $\{1,r^2\}$, $\{ 1,r,r^2\}$, $\{1,r^2,s\}$, $\{ 1,r^2,rs\}$ and $\{1,r,r^2,s,rs\}$ respectively. 
Next, we must find representatives of the representations of each subgroup up to conjugation by $G$. In the case of $N = C_2$ the $2$ irreducible representations are not equivalent, label these representations by $1$ and $x$. For $C_4$ the irreducible representations with complex character values are equivalent, let $1$, $y_1$, $y_2$ denote representatives. For $C_2^2$ and $\widehat{C_2^2}$  the irreducible representations which give $-1$ on the class corresponding to $r^2$ are equivalent. In both cases, let $1$,$z_1$, $z_2$ denote representatives. Finally, for $D_8$ the representations will be denoted by $\{1,v_1,v_2,v_3,v_4\}$, where $v_4$ is the $2$-dimensional one. Using our formula for the marks, we produce Table \ref{tab:mytable} with our code \cite{Github}.

\begin{table}[h!]
  \centering\
  \begin{tabular}{|cc|cc|ccc|ccc|ccc|ccccc|}
    \hline
    \multirow{2}{*}{}& & \multicolumn{2}{|c|}{$C_2$} & \multicolumn{3}{|c|}{$C_4$} & \multicolumn{3}{|c|}{$C_2^2$} & \multicolumn{3}{|c|}{$C_2^2$} & \multicolumn{5}{|c|}{$D_8$} \\
    &&1 & $r^2$ & 1 & r & $r^2$ & 1 &$r^2$ & s & 1 &$r^2$ & rs & 1 & r & $r^2$ & s & rs \\
    \hline
    \multirow{2}{*}{\text{$C_2$}} & 1 & 4 & 4 & 0 & 0 & 0 & 0 & 0 & 0 & 0 & 0 & 0 & 0 & 0 & 0 & 0 & 0 \\
    & x & 4 & -4 & 0 & 0 & 0 & 0 & 0 & 0 & 0 & 0 & 0 & 0 & 0 & 0 & 0 & 0 \\
    \hline
    \multirow{3}{*}{$C_4$} & 1 & 2 & 2 & 2 & 2 & 2 & 0 & 0 & 0 & 0 & 0 & 0 & 0 & 0 & 0 & 0 & 0 \\
    & $y_1$ & 2 & -2 & 2 & 0 & -2 & 0 & 0 & 0 & 0 & 0 & 0 & 0 & 0 & 0 & 0 & 0 \\
    & $y_2$ & 2 & 2 & 2 & -2 & 2 & 0 & 0 & 0 & 0 & 0 & 0 & 0 & 0 & 0 & 0 & 0 \\
    \hline
    \multirow{3}{*}{$C_2^2$} & 1 & 2 & 2 & 0 & 0 & 0 & 2 & 2 & 2 & 0 & 0 & 0 & 0 & 0 & 0 & 0 & 0 \\
    & $z_1$ & 2 & 2 & 0 & 0 & 0 & 2 & -2 & 2 & 0 & 0 & 0 & 0 & 0 & 0 & 0 & 0 \\
    & $z_2$ & 2 & -2 & 0 & 0 & 0 & 2 & 0 & -2 & 0 & 0 & 0 & 0 & 0 & 0 & 0 & 0 \\
    \hline
    \multirow{3}{*}{\text{$C_2^2$}} & 1 & 2 & 2 & 0 & 0 & 0 & 0 & 0 & 0 & 2 & 2 & 2 & 0 & 0 & 0 & 0 & 0 \\
    & $z_1$ & 2 & 2 & 0 & 0 & 0 & 0 & 0 & 0 & 2 & 2 & -2 & 0 & 0 & 0 & 0 & 0 \\
    & $z_2$ & 2 & -2 & 0 & 0 & 0 & 0 & 0 & 0 & 2 & -2 & 0 & 0 & 0 & 0 & 0 & 0 \\
    \hline
    \multirow{5}{*}{\text{$D_8$}} & 1 & 1 & 1 & 1 & 1 & 1 & 1 & 1 & 1 & 1 & 1 & 1 & 1 & 1 & 1 & 1 & 1 \\
    & $v_1$ & 1 & 1 & 1 & 1 & 1 & 1 & -1 & 1 & 1 & 1 & -1 & 1 & 1 & -1 & 1 & -1 \\
    & $v_2$ & 1 & 1 & 1 & -1 & 1 & 1 & 1 & 1 & 1 & 1 & -1 & 1 & -1 & 1 & 1 & -1 \\
    & $v_3$ & 1 & 1 & 1 & -1 & 1 & 1 & -1 & 1 & 1 & 1 & 1 & 1 & -1 & -1 & 1 & 1 \\
    & $v_4$ & 2 & -2 & 2 & 0 & -2 & 2 & 0 & -2 & 2 & -2 & 0 & 2 & 0 & 0 & -2 & 0 \\
    \hline
  \end{tabular}
  \caption{Reduced global table of the global representation ring $R(D_8,C_2)$.}
  \label{tab:mytable}
\end{table}
\end{example}

Given a table $T(G,N)$, it is natural to divide the table into blocks labelled by the subgroup $H$ on the row, and the subgroup $K$ on the column, we denote this block by $\mathcal{B}_{H,K}$. We have general descriptions for some of the blocks within the table, as explained in the following lemma.

\begin{proposition} \label{description_of_blocks}
Let $N \lhd G$. The reduced global tables $T(G,N)$ contain the following information:
\begin{enumerate}[label=\roman*)]
    \item The top left entry of each block of $T(G,e)$ recovers the table of marks.
    \item If $S \leq G$ with $N_G(S) = S$ then $\mathcal{B}_{S,S}$ is the character table of $S$. In particular, the bottom right block $\mathcal{B}_{G,G}$ of $T(G,N)$ is the character table of $G$.
    \item $\mathcal{B}_{H,K}$  is a zero block if and only $K$ is not a subgroup of any conjugate of $H$. In particular, the table is block lower triangular.
    \item If $N \lhd G$ then $\mathcal{B}_{N,N}$ is the restriction of the induction of irreducible characters up to deleting duplicate rows and columns.
\end{enumerate}
\end{proposition}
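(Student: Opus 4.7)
The unifying strategy is to expand, in each case, the definition
\[
(G/H,V)(K,k) \;=\; \sum_{g \in (G/H)^K} \chi_V(g^{-1}kg),
\]
and then read off the claim from the geometry of the fixed set $(G/H)^K$ together with the row/column labelling conventions. For part~i), I would set $V$ to be the trivial representation and $k=e$, so every summand equals $1$. The sum collapses to $|(G/H)^K|=[G/H,K]$, which is by definition the $(H,K)$-entry of the table of marks; this is precisely the top-left position of the block $\mathcal{B}_{H,K}$ in $T(G,e)$.

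For part~ii), the key is to compute $(G/S)^S$ under the hypothesis $N_G(S)=S$. A coset $gS$ is $S$-fixed iff $g^{-1}Sg \subseteq S$, and since both sides have the same size this forces $g^{-1}Sg = S$, i.e.\ $g \in N_G(S) = S$. Hence $(G/S)^S = \{S\}$ and the mark simplifies to $\chi_V(k)$. Rows of $\mathcal{B}_{S,S}$ are indexed by irreducible $S$-representations and columns by elements of $S$, in both cases taken up to $N_G(S)=S$-conjugacy, which is inner conjugacy; so the indexing is by $\text{Irr}(S)$ and conjugacy classes of $S$, and $\mathcal{B}_{S,S}$ is literally the character table of $S$. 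Taking $S=G$ yields the ``in particular'' statement.

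For part~iii), I would argue that $\mathcal{B}_{H,K}$ vanishes identically iff $(G/H)^K = \emptyset$. One direction is immediate from the mark formula; for the converse, taking $V$ trivial and $k=e$ produces the nonzero entry $|(G/H)^K|$. Emptiness of $(G/H)^K$ is equivalent to $g^{-1}Kg \not\subseteq H$ for every $g \in G$, i.e.\ $K$ is not a subgroup of any conjugate of $H$. Block lower triangularity then follows from the convention that rows and columns are arranged by non-decreasing order of the indexing subgroup: if $|H|<|K|$, then $K$ cannot sit inside any conjugate of $H$, so $\mathcal{B}_{H,K}=0$.

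For part~iv), when $N \lhd G$ every coset of $N$ is $N$-fixed, so $(G/N)^N = G/N$; for $k \in N$ the mark becomes
\[
(G/N,V)(N,k) \;=\; \sum_{gN \in G/N}\chi_V(g^{-1}kg) \;=\; \frac{1}{|N|}\sum_{g \in G}\chi_V(g^{-1}kg),
\]
which is exactly $\bigl(\Res^G_N \Ind^G_N \chi_V\bigr)(k)$ by Frobenius' induction formula, noting that $g^{-1}kg \in N$ automatically because $N$ is normal. The ``deletion of duplicates'' is built into the indexing: rows are $N_G(N)=G$-orbits on $\text{Irr}(N)$ and columns are $G$-orbits on $N$, so $G$-conjugate rows (respectively columns) are identified in $T(G,N)$. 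No part feels like a serious obstacle; the only mildly delicate points are using the normaliser condition in ii) to collapse $(G/S)^S$ to a single element, and in iv) recognising the internal sum as Frobenius' formula with the matching orbit bookkeeping.
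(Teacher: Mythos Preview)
Your proposal is correct and follows essentially the same approach as the paper: in each part you unwind the mark formula $(G/H,V)(K,k)=\sum_{g\in (G/H)^K}\chi_V(g^{-1}kg)$ and read off the claim from the structure of $(G/H)^K$, exactly as the paper does. Your treatment is if anything slightly more explicit---e.g.\ spelling out the equal-size argument to collapse $(G/S)^S$ to a single coset in ii), and writing the Frobenius induction formula with the $1/|N|$ normalisation in iv)---but there is no substantive difference in strategy.
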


\begin{proof} For $i)$, consider block $\mathcal{B}_{H,K}$. Then the top left entry corresponds to the mark 
    \[(G/H,1)(K,e) = \sum_{g \in (G/H)^K} \chi_{1}(e) = |(G/H)^K|.\] Observe that this is exactly the entry of the table of marks corresponding to subgroups $H,K$.\\
    
    For $ii)$, observe that $(G/S)^S = \{xS \in G/S : x^{-1}Sx \in S\} = N_G(S)/S$. In the case $S = N_G(S)$ then orbits of $S$ under the action of $N_G(S)$ are clearly equal to the conjugacy classes of $S$, hence the columns in the global table corresponding to $S$ are labelled by conjugacy classes. Likewise, the rows corresponding to the subgroup $S$ are labelled by the irreducible representations. Let $V \in \text{Irr}(S)$ be an irreducible representation, and $s \in S$, then $\displaystyle (G/S,V)(K,k) = \sum_{g \in (G/S)^S} \chi_{V}(g^{-1}kg) = \chi_{V}(k)$.\\
    
    For $iii)$, if $K$ is a subgroup of some conjugate of $H$ then the top left entry of the block, which is equal to the corresponding entry in the table of marks, is non-zero and thus the block is not a zero block. On the other hand, suppose $K$ is not a subgroup of any conjugate of $H$, then for any representation $V \in \text{Irr}(H)$ and element $k \in K$ we have 
    $\displaystyle(G/H,V)(K,k) = \sum_{g \in (G/H)^K} \chi_V(g^{-1}xg)$, however, this is an empty sum and so its value is $0$.\\
    
    For $iv)$, let $V \in \text{Irr}(N)$, then the induced representation has character values on elements $n \in N$ given by
    \[\chi_{V\uparrow_{H}^G}(n) = \sum_{g \in G/N} \chi_V(g^{-1}ng) \]
    but this is exactly the value of the mark at entry corresponding to representation $V$ and element $n \in N$. It is left to observe that if $n,n'$ are $G$-conjugates then any induced character takes the same value on $n$ and $n'$. Likewise if $V, W$ are two irreducible representations of $N$ which are conjugate by an element of $G$ then the induced characters of $V$ and $W$ take the same values on all elements of $N$. Thus, inducing all irreducible characters of $N$, restricting them to evaluation only at elements of $N$, and then deleting rows and columns which are duplicates will return exactly the block $\mathcal{B}_{N,N}$, as required.
\end{proof}

Given the global table of a group $G$, it is an interesting question to determine what properties of the group can be recovered from the table (by $\lq\lq$the table" we refer to the inner table, that is, we preserve the values within the table whilst discarding all labelling of rows and columns). For the character table and the table of marks, this has been investigated. For example, given these two tables, we can recover the following properties: the number of conjugacy classes, the isomorphism classes of the centre and abelianisation, the sizes of subgroups and the subgroup lattice (as a directed graph, ordered by inclusion) along with which subgroups are normal. By Proposition~\ref{description_of_blocks} one can also recover these properties from the global table of a group. However, one can also recover additional invariants, as we describe in the following lemma.

\begin{proposition} \label{properties global table}
The following properties of $G$ can be recovered from the global table $T(G,e):$
\begin{enumerate}[label=\roman*)]
    \item All properties of $G$ that can be recovered from the character table and table of marks of $G$.
    \item The order of the elements in any conjugacy class of $G$.
    \item All abelian subgroups of $G$ up to isomorphism class.
\end{enumerate}
\end{proposition}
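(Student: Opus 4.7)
The plan is to prove the three claims in sequence, leveraging the block decomposition from Proposition~\ref{description_of_blocks}. A preliminary step common to all parts is to recover this decomposition from the raw numerical data: the $(1,1)$-entry must be $|G|$ (the unique entry of the first block $\mathcal{B}_{\{e\},\{e\}}$), the table is block lower-triangular by Proposition~\ref{description_of_blocks}(iii), and the sizes of the successive diagonal blocks are dictated by the subgroup lattice as it is progressively revealed. Part (i) is then immediate from Proposition~\ref{description_of_blocks}: the top-left entries of the diagonal blocks recover the table of marks of $G$, and the bottom-right block $\mathcal{B}_{G,G}$ is the character table of $G$, so every invariant of $G$ derivable from either classical table is derivable from $T(G,e)$.

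For part (ii), the plan is to exploit the fact that $(G/G)^K=\{eG\}$ for every subgroup $K$, which gives
\[
(G/G, V)(K, k) \,=\, \chi_V(k)
\]
for all $V \in \text{Irr}(G)$ and $k \in K$. Thus the bottom block of rows assigns to each column $(K, k)$ the tuple $(\chi_V(k))_{V \in \text{Irr}(G)}$, which pins down the $G$-conjugacy class $[k]_G$ by column orthogonality of the character table. Fix now a class $[g]_G$ identified via a column of $\mathcal{B}_{G,G}$: the cyclic subgroup $\langle g\rangle$ supplies a column $(\langle g\rangle, g)$ of class $[g]_G$, while every column $(K, k)$ with $k \in [g]_G$ satisfies $\langle k\rangle \leq K$ and hence $|K| \geq |g|$. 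Therefore $|g|$ equals the minimum $|K|$ over all columns $(K, k)$ with $k \in [g]_G$, a quantity read directly off the block structure.

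For part (iii), I combine (i) and (ii). For each subgroup $H \leq G$ and each $G$-conjugacy class $[g]_G$, the double-count identity
\[
|H \cap [g]_G| \,=\, \frac{|H|\cdot [G/H, \langle g\rangle]}{|C_G(g)|}
\]
expresses the intersection size in terms of data already recovered, with $[G/H,\langle g\rangle]$ read from the table of marks and $|C_G(g)| = |G|/|[g]_G|$ from the character table. Grouping these intersections by the order of a class representative (known from (ii)) yields the multiset of element orders in $H$. By the structure theorem for finite abelian groups, this multiset determines any abelian group up to isomorphism. It remains to identify which subgroups are abelian; for this I use the criterion that $H$ is abelian if and only if $H \leq C_G(h)$ for every $h \in H$. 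Each centralizer $C_G(g)$ is a subgroup of $G$ of known order $|G|/|[g]_G|$ containing $\langle g\rangle$, and its location in the subgroup lattice is pinned down by these constraints together with mark-theoretic consistency. The main obstacle is this final lattice-localisation of centralizers as concrete subgroups rather than as mere conjugacy classes of subgroups, after which the abelian subgroups of $G$ are both identified and classified up to isomorphism.
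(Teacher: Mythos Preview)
Your arguments for parts (i) and (ii) are correct and essentially coincide with the paper's: both identify the $G$-conjugacy class of each column $(K,k)$ by reading off the character-table column in the bottom block row, and both recover the order of $g$ as the smallest $|K|$ among columns carrying the class $[g]_G$.

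For part (iii) your double-count identity
\[
|H \cap [g]_G|\;=\;\frac{|H|\cdot [G/H,\langle g\rangle]}{|C_G(g)|}
\]
is valid and yields the full multiset of element orders in $H$; this is arguably more explicit than the paper's appeal to ``all cyclic subgroups of $H$'' and is a nice alternative for pinning down the isomorphism type once $H$ is known to be abelian.

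However, your detection of \emph{which} subgroups are abelian has a genuine gap. The criterion $H\leq C_G(h)$ for all $h\in H$ is correct, but the table only records subgroups up to $G$-conjugacy, so even if you could single out the conjugacy class $[C_G(g)]$ (which you cannot in general---several classes may have the right order and contain a conjugate of $\langle g\rangle$), the lattice relation $[H]\preceq [C_G(g)]$ only says that \emph{some} conjugate of $H$ lies in $C_G(g)$, not $H$ itself. You flag this as ``the main obstacle'' but do not overcome it, and the multiset of element orders alone will not rescue you: $C_3^3$ and the Heisenberg group of order $27$ and exponent $3$ have identical element-order multisets. The paper sidesteps all of this by reading abelianness directly off the diagonal block $\mathcal{B}_{H,H}$: the first column of that block has entries $|N_G(H)/H|\cdot\dim V_i$, so $H$ is abelian precisely when every entry there equals the top-left one. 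You should replace your centralizer argument with this observation.
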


\begin{proof} For $ii)$, we focus our attention on the last block of rows, that is, the rows corresponding to the subgroup $G \leq G$. Given any element $g \in G$, let $c(g)$ denote its conjugacy class in $G$. One has associated to it its corresponding column in the character table of $G$. Now consider any subgroup $H < G$. The columns appearing in block $\mathcal{B}_{G,H}$ are exactly the columns of the character table of $G$ whose corresponding conjugacy class has a non-zero intersection with $H$. Moreover, if we consider the set $\{H : H < G, H \cap c(g) \neq \phi \}$, which we can locate on the global table as the subgroups $H$ whose block $\mathcal{B}_{G,H}$ contains the column of the character table of $G$ corresponding to $c(g)$, then the subgroup of smallest size in this set must be the cyclic subgroup generated by $g$, and this is precisely the order of $g$.\\

For $iii)$, given a subgroup $H < G$, consider the block $\mathcal{B}_{H,H}$. The entries in the first column are given by $\frac{|N_G(H)|}{|H} \dim(V_i)$ where $V_i$ runs over all irreducible representations of $H$ up to $N_G(H)$ conjugation. Therefore a subgroup $H < G$ is abelian if and only if all its irreducible representations are $1$-dimensional if and only if all entries in the first column of $\mathcal{B}_{H,H}$ are $\frac{|N_G(H)|}{|H}$. Now given an abelian subgroup, by consideration of which columns appear in the blocks $\mathcal{B}_{G,-}$ along with part $b)$ we can determine all its cyclic subgroups, and therefore its isomorphism type.
\end{proof}

It is well-known that there are non-isomorphic groups with the same character table and non-isomorphic groups with the same table of marks. There are also non-isomorphic groups with the same character table and the same table of marks. The first example was constructed in \cite{KR}. Our computations on GAP \cite{Github} show that the smallest example is of order $243$ with GAP IDs $(44,45)$. We have included a table in the appendix with further examples. \\

Recall that the global table contains the character table and table of marks, so the following implications clearly hold.
$$ \text{Same group} \implies \text{Same global table} \implies \text{Same character table and table of marks} $$
 
One may wonder if either of the backward implications holds. This is not the case for either of them. For instance, the previously mentioned groups of order $243$ have the same character table and table of marks, but different global tables. Furthermore, the two pairs of groups of order $256$ with GAP IDs $(1791,1792)$ and $(3678, 3679)$ have the same global table despite not being isomorphic. \\

Observe that the global table ``knows" the character table of each subgroup $H$, albeit ``modulo" the normaliser $N(H)$. Knowing the character tables of cyclic subgroups would determine the power maps on the conjugacy classes. However, the global table does not determine the power maps: the first pair $(1791,1792)$ of groups of order $256$ have the same global tables but are not a Brauer pair, that is, they have different power maps. \\

%$x \mapsto x^n$ on the conjugacy classes. However, the first pair does not form a Brauer pair and we conclude that global tables do not determine the power maps. \\

%\begin{question}
%Consider a pair of non-isomorphic groups with the same global tables. Do they necessarily form a Brauer pair?
%\end{question}

%We have only one pair of groups as empirical evidence to support the affirmative answer to this question. 

%We find the following question interesting. %Recall that a Brauer pair is a pair of two non-isomorphic groups that have the same character table with the power maps on the conjugacy classes.

We finish this section with a brief discussion of the Knutson Index for the reduced global representation ring $R(G,N)$. We have a natural choice for the regular element, namely let $\text{reg}_N$ denote the regular representation of $N$. Then the regular element of $R(G,N)$ corresponds to $(G/N, \text{reg}_N)$. In the corresponding reduced global table $T(G,N)$ this element is of the form $(|G|,0,\dots, 0)$. Observe that when we take $N = G$ and take our generating set to be the rows of the table $T(G,G)$, we recover the original Knutson Index of a group $G$ \cite{Diego}. Furthermore, in the case where we take our generating set to be the rows of the table $T(G,e)$, we have a bound on the Knutson Index given by the Knutson Index on the Burnside ring. Specifically, let $B(G)$ denote the Burnside ring and $T(G)$ its table of marks. Let the regular element of $T(G)$ be given by the row corresponding to $G/\{e\}$, in the table this has row $(|G|,\dots,0)$. Let $T(G)$ have a natural generating set given by the rows of the table $\mathcal{R} = \{r_1, \ldots r_n\}$. Do the same for $T(G,e)$ and denote this generating set $\mathcal{R}'$. We have the following result.

\begin{lemma}
The Knutson Index of the Burnside ring $\mathcal{K}_\mathcal{R} \big(\Omega(G)\big)$ divides the Knutson Index of the global ring $\mathcal{K}_{\mathcal{R}'} \big(R(G,e)\big)$.
%The Knutson Index of the table of marks $\mathcal{K}\big(T(G),\mathcal{R}\big)$ divides the Knutson Index of the global table $\mathcal{K} \big(T(G,e), \mathcal{R'}\big)$.
\end{lemma}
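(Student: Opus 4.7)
The plan is to construct a ring homomorphism $\psi \colon R(G,e) \to \Omega(G)$ that matches the chosen regular elements and sends each trivial-representation row generator in $\mathcal{R}'$ to the corresponding row generator in $\mathcal{R}$; once such a $\psi$ is in place, the result will follow by pulling back the identity that witnesses each $\mathcal{K}_{r'}\big((G/H_i, 1)\big)$ on the right-hand side.

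First I would define $\psi$ on pairs by $\psi\big((G/H, V)\big) = \dim(V) \cdot G/H$ and extend additively. The defining relation $(G/H \cupdot G/H, V) = (G/H, V \oplus V)$ of $R(G,e)$ is respected, since both sides map to $2\dim(V) \cdot G/H$. For multiplicativity, decompose $G/H \times G/K = \bigcupdot_{i} G/L_i$ into $G$-orbits; the bundle $V \otimes W$ has constant fibre dimension $\dim(V)\dim(W)$ at every point, so
\[
\psi\big((G/H, V) \cdot (G/K, W)\big) = \dim(V)\dim(W) \sum_i G/L_i = \psi\big((G/H, V)\big) \cdot \psi\big((G/K, W)\big).
\]
Because $\text{reg}_{\{e\}}$ is the one-dimensional trivial representation of the trivial group, $\psi(r') = G/\{e\} = r$, and $\psi\big((G/H_i, 1)\big) = G/H_i$ for each $G/H_i \in \mathcal{R}$.

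With $\psi$ in hand, the conclusion is immediate. Fix $G/H_i \in \mathcal{R}$ and set $m = \mathcal{K}_{r'}\big((G/H_i, 1)\big)$. By definition there exists $x \in R(G,e)$ with $x \cdot (G/H_i, 1) = m \cdot r'$; applying $\psi$ yields $\psi(x) \cdot G/H_i = m \cdot r$ in $\Omega(G)$, and hence
\[
m \cdot r \in \Omega(G) \cdot G/H_i \cap \mathbb{Z} r = \mathcal{K}_r(G/H_i)\, \mathbb{Z} r,
\]
so $\mathcal{K}_r(G/H_i)$ divides $\mathcal{K}_{r'}\big((G/H_i, 1)\big)$, which divides $\mathcal{K}_{\mathcal{R}'}\big(R(G,e)\big)$. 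Taking the least common multiple over the generators $G/H_i \in \mathcal{R}$ then yields $\mathcal{K}_{\mathcal{R}}\big(\Omega(G)\big) \mid \mathcal{K}_{\mathcal{R}'}\big(R(G,e)\big)$.

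The main obstacle is verifying that $\psi$ is a well-defined ring homomorphism: one must check that the quotient relation of $R(G,e)$ is respected and that multiplicativity survives the orbit decomposition $G/H \times G/K = \bigcupdot_i G/L_i$. Both reduce to the elementary fact that a $G$-equivariant vector bundle has constant fibre dimension on each $G$-orbit, after which the remaining argument is purely formal.
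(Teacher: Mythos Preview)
Your argument is correct. The paper states this lemma without proof, so there is nothing to compare against; your construction of the ring homomorphism $\psi\colon R(G,e)\to\Omega(G)$, $(G/H,V)\mapsto \dim(V)\cdot G/H$, together with the observation that $\psi(r')=r$ and $\psi\big((G/H_i,1)\big)=G/H_i$, gives exactly what is needed to push the divisibility through.

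One minor remark: you can see that $\psi$ is a ring homomorphism even more cheaply by noting that, in terms of marks, $(G/H,V)(K,e)=|(G/H)^K|\cdot\dim(V)$ for every subgroup $K$; in other words, restriction to the columns of $T(G,e)$ labelled by pairs $(K,e)$ already realises $\psi$ as the composite of the mark embedding of $R(G,e)$ with projection onto those coordinates, landing in the image of the mark embedding of $\Omega(G)$. This bypasses the explicit orbit decomposition you carry out for multiplicativity, though your direct check is of course also fine.
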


It would be interesting to study the various Knutson Indices and the global table further. To facilitate future studies, we pose two questions that we find interesting.

\begin{question}
Characterise finite groups with the global Knutson Index $\mathcal{K}_{\mathcal{R}'} \big(R(G,e)\big)$ equal to $1$.   
\end{question}

\begin{question}
Suppose the Knutson Indices of the character ring and Burnside ring of a group $G$ are both equal to $1$. Is the Knutson Index of the global table of $G$ necessarily $1$?
\end{question}

%\begin{question}
%Compute the Knutson Indices of finite simple groups.    
%\end{question}

%It would be interesting to study further relations between the Knutson Index of the global table of a group $G$ and the Knutson Indices of both the character table and table of marks. In particular, we would like to answer questions such as $\lq\lq$Under what conditions does a group $G$ have global Knutson Index $1$?", or $\lq\lq$Given that a group $G$ has Knutson Indices of its character table and table of marks both equal to $1$, can anything be said about the Knutson Index for the global table?".

\newpage
\appendix

\section[Further counterexamples to the converse of Lemmas~\ref{lemma p-split}({\em ii}) \& \ref{lemma index}]{\large Further counterexamples to the converse of Lemmas~\ref{lemma p-split}({\em ii}) \& \ref{lemma index}}

The following table contains all non-split extensions in which all resulting sequences of Sylow $p$-subgroups split, where $G$ has order up to and including $383$. This was computed with GAP \cite{Github}. %There are of order 384...

\begin{table}[h!]
    \fontsize{9pt}{9pt}\selectfont
    \label{counterexample_table}
    \centering
    \begin{tabular}[t]{c|c}
        Non split extension & GAP ID \\
        \hline
        $Q_8 \cdot C_6$ & $48,33$ \\
        $Q_8 \cdot Dic_3 $ & $96,67$ \\
        $Q_8 \cdot D_{12}$ & $96,193$ \\
        $Q_8 \cdot (C_2 \times C_6)$ & $96,200$ \\
        $(C_2 \times Q_8) \cdot C_6$ & $96,200$ \\
        $Q_8 \cdot A_4$ & $96,201$ \\
        $Q_8 \cdot (C_2 \times C_6)$ & $96,201$ \\
        $(C_4 \circ D_8) \cdot C_6$ & $96,201$ \\
        $Q_8 \cdot C_{18}$ & $144,36$ \\
        $Q_8 \cdot (C_3 \times S_3)$ & $144,127$ \\
        $(C_3 \times Q_8) \cdot C_6$ & $144, 127$ \\
        $Q_8 \cdot (C_3 \times C_6)$ & $144,157$ \\
        $(C_3 \times Q_8) \cdot C_6$ & $144, 157$ \\
        $He_3 \cdot C_6$ & $162, 14$ \\
        $He_3 \cdot S_3$ & $162, 15$ \\
        $(C_2 \cdot C_4^2) \cdot C_6$ & $192, 194$ \\
        $Q_8 \cdot Dic_6$ & $192, 950$ \\
        $Q_8 \cdot (C_4 \times S_3)$ & $192, 953$ \\
        $Q_8 \cdot (C_2 \times Dic_3)$ & $192,981$ \\
        $(C_2 \times Q_8) \cdot Dic_3$ & $192,981$ \\
        $(C_4 \circ D_8) \cdot Dic_3$ & $192,981$ \\
        $(C_2 \times Q_8) \cdot Dic_3$ & $192, 982$ \\
        $(C_4 \circ D_8) \cdot Dic_3$ & $192,982$ \\
        $Q_8 \cdot (C_2 \times Dic_3)$ & $192,985$ \\
        $Q_8 \cdot (C_3 \rtimes D_8)$ & $192, 986$ \\
        $Q_8 \cdot S_4$ & $192,988$ \\
        $Q_8 \cdot (C_3 \rtimes D_8)$ & $192, 988$ \\
        $Q_8 \cdot (C_2 \times C_{12})$ & $192, 997$ \\
        $(C_4 \times Q_8) \cdot C_6$ & $192, 997$ \\
        $Q_8 \cdot (C_2 \times C_{12})$ & $192, 999$ \\
        $(C_4 \times Q_8) \cdot C_6$ & $192, 999$ \\
        $Q_8 \cdot (C_3 \times D_8)$ & $192, 1003$ \\
        $(C_2^2 \times Q_8) \cdot C_6$ & $192, 1003$ \\
        $Q_8 \cdot (C_3 \times D_8)$ & $192, 1005$ \\
        $(C_4 \times Q_8) \cdot C_6$ & $192, 1005$ \\
        $(C_2 \times (C_4 \circ D_8)) \cdot C_6$ & $192, 1005$ \\
        $Q_8 \cdot (C_3 \times Q_8)$ & $192, 1006$ \\
        $Q_{16} \cdot A_4$ & $192,1017$ \\
        $2^{1+4}_+ \cdot C_6$ & $192, 1017$ \\
        $(C_8 \circ D_4) \cdot C_6$ & $192, 1017$ \\
        $2^{1+4}_{-} \cdot C_6$ & $192, 1018$ \\
        $SD_{16} \cdot A_4$ & $192, 1018$ \\
        $Q_8 \cdot (C_2^2 \times S_3)$ & $192,1481$ \\
        $(C_2 \times Q_8) \cdot D_{12}$ & $192,1481$ \\
        $(C_2 \times Q_8) \cdot D_{12}$ & $192,1482$ \\
        $(C_4 \circ D_8) \cdot D_{12}$ & $192,1482$ \\
        $(C_4 \circ D_8) \cdot D_{12}$ & $192,1484$ \\
        $Q_8 \cdot (C_2^2 \times C_6)$ & $192,1500$ \\
        $(C_2 \times Q_8) \cdot (C_2 \times C_6)$ & $192,1500$ \\
        $(C_2^2 \times Q_8) \cdot C_6$ & $192,1500$ \\
        $Q_8 \cdot (C_2 \times A_4)$ & $192,1502$ \\
        $Q_8 \cdot (C_2^2 \times C_6)$ & $192,1502$ \\
        $(C_2 \times Q_8) \cdot A_4$ & $192,1502$ \\
        $(C_2 \times Q_8) \cdot (C_2 \times C_6)$ & $192,1502$ \\
        $(C_4 \circ D_8) \cdot (C_2 \times C_6)$ & $192,1502$ \\
        $(C_2 \times (C_4 \circ D_8)) \cdot C_6$ & $192,1502$ \\
        $(C_2 \times Q_8) \cdot (C_2 \times C_6)$ & $192,1504$ \\
        $(C_4 \circ D_8) \cdot A_4$ & $192,1504$ \\
        $(C_4 \circ D_8) \cdot (C_2 \times C_6)$ & $192,1504$ \\
        $Q_8 \cdot (C_2 \times A_4)$ & $192,1507$ \\
        $(C_2^2 \times Q_8) \cdot C_6$ & $192,1507$ \\
    \end{tabular}
    \hspace{8ex}
    \begin{tabular}[t]{c|c}
        Non split extension & GAP ID \\
        \hline 
        $2_+^{1+4} \cdot C_6$ & $192,1509$ \\
        $Q_8 \cdot (C_3 \times D_{10})$ & $240, 108$ \\
        $(C_5 \times Q_8) \cdot C_6$ & $240,108$ \\
        $Q_8 \cdot C_{30}$ & $240, 154$ \\
        $(C_5 \times Q_8) \cdot C_6$ & $240, 154$ \\
        $Q_8 \cdot Dic_9$ & $288, 70$ \\
        $Q_8 \cdot D_{36}$ & $288,340$ \\
        $Q_8 \cdot (C_2 \times C_{18})$ & $288,347$ \\
        $(C_2 \times Q_8) \cdot C_{18}$ & $288,347$ \\
        $Q_8 \cdot (C_3 \cdot A_4)$ & $288,348$ \\
        $Q_8 \cdot (C_2 \times C_{18})$ & $288,348$ \\
        $(C_4 \circ D_8) \cdot C_{18}$ & $288,348$ \\
        $Q_8 \cdot (C_3 \times Dic_3)$ & $288,400$ \\
        $(C_3 \times Q_8) \cdot Dic_3$ & $288,400$ \\
        $Q_8 \cdot (C_2 \rtimes Dic_3)$ & $288,404$ \\
        $(C_3 \times Q_8) \cdot Dic_3$ & $288,404$ \\
        $(C_3 \times Q_8) \cdot D_{12}$ & $288,847$ \\
        $Q_8 \cdot (C_3 \times A_4)$ & $288,860$ \\
        $Q_8 \cdot (S_3 \times C_6)$ & $288,904$ \\
        $(C_3 \times Q_8) \cdot D_{12}$ & $288,904$ \\
        $Q_8 \cdot (C_2 \times C_3 \rtimes S_3)$ & $288,915$ \\
        $(C_3 \times Q_8) \cdot D_{12}$ & $288,915$ \\
        $Q_8 \cdot (S_3 \times C_6)$ & $288,921$ \\
        $(C_2 \times Q_8) \cdot (C_3 \times S_3)$ & $288,921$ \\
        $(C_3 \times Q_8) \cdot (C_2 \times C_6)$ & $288,921$ \\
        $(C_6 \times Q_8) \cdot C_6$ & $288,921$ \\
        $Q_8 \cdot (S_3 \times C_6)$ & $288,924$ \\
        $(C_4 \circ D_8) \cdot (C_3 \times S_3)$ & $288,924$ \\
        $(C_3 \times Q_8) \cdot (C_2 \times C_6)$ & $288,924$ \\
        $Dic_6 \cdot A_4$ & $288,924$ \\
        $(Q_8 \rtimes S_3) \cdot C_6$ & $288,924$ \\
        $(C_3 \times (C_4 \circ D_8)) \cdot C_6$ & $288,924$ \\
        $Q_8 \cdot (S_3 \times C_6)$ & $288,925$ \\
        $(C_3 \times Q_8) \cdot (C_2 \times C_6)$ & $288,925$ \\
        $(S_3 \times Q_8) \cdot C_6$ & $288,925$ \\
        $Q_8 \cdot C_6^2$ & $288,983$ \\
        $(C_2 \times Q_8) \cdot (C_3 \times C_6)$ & $288,983$ \\
        $(C_3 \times Q_8) \cdot (C_2 \times C_6)$ & $288,983$ \\
        $(C_6 \times Q_8) \cdot C_6$ & $288,983$ \\
        $(C_3 \times (C_4 \circ D_8)) \cdot C_6$ & $288,984$ \\
        $(C_4 \circ D_8) \cdot (C_3 \times C_6)$ & $288,984$ \\
        $(C_3 \times Q_8) \cdot A_4$ & $288,984$ \\
        $(C_3 \times Q_8) \cdot (C_2 \times C_6)$ & $288,984$ \\
        $Q_8 \cdot (C_3 \times A_4)$ & $288,984$ \\
        $Q_8 \cdot C_6^2$ & $288,984$ \\        
        $2^{1+4}_- \cdot C_{10}$ & $320,1586$ \\
        $He_3 \cdot C_{12}$ & $324, 17$ \\
        $He_3 \cdot Dic_3$ & $324, 18$ \\
        $He_3 \cdot D_{12}$ & $324, 41$ \\
        $He_3 \cdot (C_2 \times C_6)$ & $324, 72$ \\
        $(C_2 \times He_3) \cdot C_6$ & $324, 72$ \\
        $He_3 \cdot D_{12}$ & $324, 73$ \\
        $(C_2 \times He_3) \cdot S_3$ & $324, 73$ \\
        $Q_8 \cdot (C_3 \times D_{14})$ & $336,131$ \\
        $(C_7 \times Q_8) \cdot C_6$ & $336, 131$ \\
        $Q_8 \cdot Dic_{7}$ & $336,134$ \\
        $(C_7 \times Q_8) \cdot C_6$ & $336, 134$ \\
        $Q_8 \cdot C_{42}$ & $336,170$ \\
        $(C_7 \times Q_8) \cdot C_6$ & $336, 170$ \\
        $Q_8 \cdot (C_3 \times C_7 \rtimes C_3)$ & $336,173$ \\
        $(C_7 \times Q_8) \cdot C_6$ & $336, 173$ \\
    \end{tabular}
    \label{table}
\end{table}

\section{Groups with same character table and same table of marks}\label{appendix-samecharburn}
Up to order $765$ there are examples of such pairs of groups of orders $243$, $256$, $384$, $486$, $512$, $640$, $672$ and $729$. Note that the lists are potentially not exhaustive for the groups of order $256$ and $640$ because we have stopped the calculation. We have not attempted it for groups of order $512$. This was computed with GAP \cite{Github}. %Including 765

\begin{table}[h!]
    %\fontsize{9pt}{9pt}\selectfont
    \label{same_char_same_burnside}
    \centering
    \begin{tabular}[t]{c|l}
    Order of groups & \hspace{35mm} GAP IDs \\
    \hline
    \\
    $243 = 3^5$ & $(44,45)$ \\
    \\
    $256 = 2^8$ & $(227,228)$, \ $(1127,1128)$, \ $(1129,1130)$, \ $(1720,1721)$, \ $(1728,1729)$, \\ & $(1732,1733)$, \ $(1741,1742)$, \ $(1770,1771)$, \ $(1791,1792)$, \ $(3599,3600)$, \\ & $(3601,3602)$, \ $(3678,3679)$, \ $(4156,4159)$, \ $(4530,4534)$, \ $(4532,4536)$ \\ & $(10818,10820)$, \ $(10819,10821)$, \ $(10971,10973)$, \ $(10972,10974)$, \\ & $(10994,10996)$, \ $(10998,11000)$, \ $(11064,11066)$, \ $(11065,11067)$, \\ & $(11068,11070)$, \ $(11069,11071)$, \ $(15875,15877)$, \ $(15937,15939)$, \\ & $(53284,53285)$, \ $(53306,53315)$, \ $(53307,53314)$, \ $(53322,53327)$, \\ & $(53324,53332)$, $(53326,53334)$  \\  %this list is now complete 11.07.2024 - see comments below 
    \\
    $384 = 2^7 \cdot 3$ & $(12031, 12095)$, \ 
    $(12035,12096)$, \ 
    $(12041,12101)$, \ 
    $(12043,12104)$, \\
    & $(12055,12113)$, \ 
    $(12058,12115)$, \ 
    $(12064,12120)$, \ 
    $(12066,12121)$ \\
    \\
    $486 = 2 \cdot 3^5$ & $(155,157)$, \ $(203,204)$ \\
    \\
    $512 = 2^9$ & \textcolor{red}{not computed} \\
    \\
    $640 = 2^7 \cdot 5$ & $(13268,13332)$, \
     $(13272,13333)$, \    
     $(13278,13338)$, \
     $(13280,13341)$, \\
    & $(13292,13350)$, \
     $(13295,13352)$, \
     $(13301,13357)$, \
     $(13303,13358)$ \\
    & \textcolor{red}{potentially more} \\ %more found see below
    \\
    $672 = 2^5 \cdot 3 \cdot 7$ & $(756,757)$ \\
    \\
    $729 = 3^6$ & $(14,15)$, $(31,33)$, $(44,45)$, $(46,47)$, $(56,57)$, $(72,73)$, $(82,88)$, \\
    & $(111,112)$, $(128,129)$, $(222,233)$, $(224,237)$, $(226,228)$, $(232,236)$, \\
    & $(262,264)$, $(288,289)$, $(330,334)$, $(332,336)$, $(434,435)$, $(464,467)$ 
    \end{tabular}
    \label{table_samecharburn}
\end{table}

\newpage
\printbibliography

\end{document}